\newtheorem{ut}{Theorem}
\newtheorem{up}{Proposition}
\newtheorem{uq}{Question}
\theoremstyle{definition}
\newtheorem*{ue}{Example $\mathbf{X_1}$}
\newtheorem*{ue2}{Example $\mathbf{X_2}$}
\newtheorem*{ue3}{Example $\mathbf{X_3}$}
\newtheorem*{ur}{Remark}
\DeclareMathOperator{\Gr}{Gr}
\numberwithin{equation}{section}
\date{}
\def\bysame{\leavevmode\hbox to3em{\hrulefill}\thinspace}
\begin{document}

\baselineskip=17pt

\title{\Large Widely-connected sets in the
 bucket-handle  continuum}

\author{David Lipham\\
Department of Mathematics and Statistics\\ 
Auburn University\\
Auburn, AL 36849, United States\\
E-mail: dsl0003@auburn.edu}

\maketitle

\renewcommand{\thefootnote}{}

\footnote{2010 \emph{Mathematics Subject Classification}: Primary 54D05; Secondary 54F15, 54E50, 54G15.}

\footnote{\emph{Key words and phrases}: widely-connected, biconnected, completely metrizable.}

\renewcommand{\thefootnote}{\arabic{footnote}}
\setcounter{footnote}{0}

\vspace{-15mm}

\begin{abstract}A connected topological space is said to be widely-connected if each of its non-degenerate connected subsets is dense in the entire space. The object of this paper is the construction of widely-connected subsets of the plane.   We give a completely metrizable example that answers a question of Paul Erd\H{o}s and Howard Cook, while a similar example answers a question of Jerzy Mioduszewski.   
\end{abstract}

\maketitle

\section{Introduction}

A topological space $X$ is \textit{punctiform} if $X$ has no compact connected subset of cardinality greater than one. To avoid the trivial examples, assume from now on that every space has more than one point.  

The earliest examples of connected punctiform spaces are due to Mazurkiewicz \cite{sur}, Kuratowski and Sierpinski  \cite{ks}, and Knaster and Kuratowski \cite{knas}. They exploit the fact that a discontinuous function can have a connected graph.    For instance, let $\varphi(x)=\sin(1/x)$ for $x\neq 0$ and put $\varphi(0)=0$. Then $\text{disc}(\varphi)=\{ 0\}$, but $\Gr(\varphi)$ is connected as it consists of two rays with a common limit point at the origin.\footnote{For any space $X$ and real-valued function $f:X\to \mathbb R$,  we let $\text{disc}(f):=\{x\in X:f\text{ is not continuous at }x\}$ denote the set of discontinuities of $f$. $\Gr(f):=\{\langle x,f(x)\rangle\in X\times \mathbb R:x\in X\}$ denotes the graph of $f$.} Now let $\mathbb Q =\{q_n:n<\omega\}$ be an enumeration of the rationals, and define $f:\mathbb R \to \mathbb R$  by $f(x)=\sum_{n=0}^\infty \varphi(x-q_n)\cdot 2^{-n}.$ The function $f$ satisfies the conclusion of the Intermediate Value Theorem, and $\text{disc}(f)=\mathbb Q$. These properties imply  $\Gr(f)$ is both punctiform and connected.  Further, $\text{Gr} (f)$ is a $G_\delta$-subset of the plane, and is therefore completely metrizable.

There are two   types of punctiform connected spaces that exclude graphs of  functions from the real line. A  connected space $X$ is  \textit{biconnected} if $X$ is not the union of two disjoint connected sets each containing more than one point, and \textit{widely-connected} if every  non-degenerate connected subset of $X$ is dense in $X$.  That  biconnected  and widely-connected Hausdorff spaces are punctiform is an easy  consequence of the Boundary Bumping Theorem,  Lemma 6.1.25 in \cite{eng2}.  

Knaster and Kuratowski  \cite{kk} gave the first  biconnected example by constructing a connected set with a dispersion point. A point $p$ in a connected space $X$ is a \textit{dispersion point} if $X\setminus \{p\}$ is  \textit{hereditarily disconnected}, i.e.,  if no connected subset of $X\setminus \{p\}$ has more than one point. Completely metrizable dispersion point sets are given in \cite{knas} and \cite{rob}. Assuming the Continuum Hypothesis, E.W. Miller \cite{mil}  showed that there is a biconnected set \textit{without} a  dispersion point. Miller's biconnected set is also widely-connected. The original widely-connected set slightly predates Miller's example, and is due to P.M. Swingle \cite{swi}. Neither example is completely metrizable. 

\begin{figure}[ht]
  \centering
  \includegraphics[scale=.57]{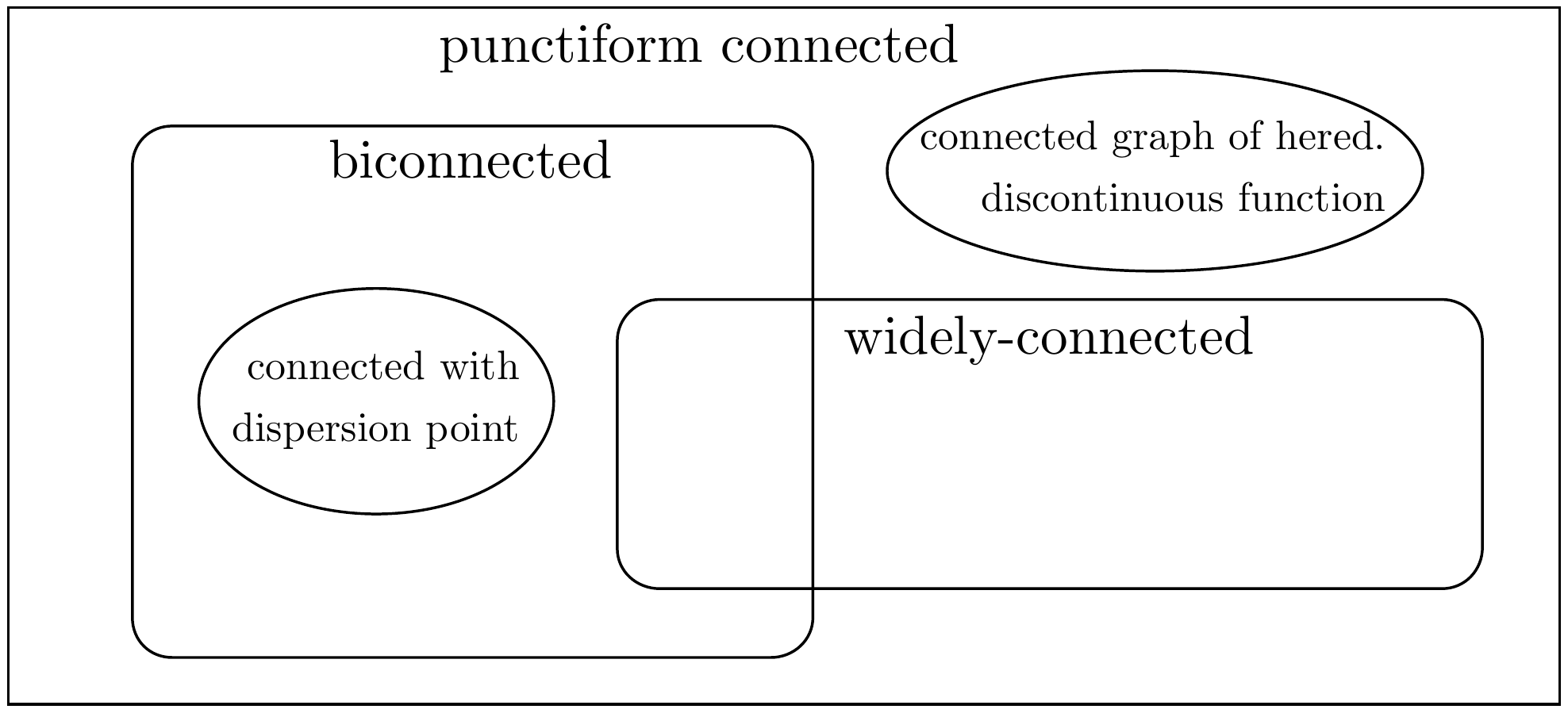} 
   \caption{Classes of some T$_2$ connected  spaces}
\end{figure}

In this paper we describe a method for producing widely-connected subsets of the bucket-handle continuum $K\subseteq \mathbb R ^2$.  An example is obtained in Section \ref{ec} by deleting a countable infinity of nowhere dense compact sets from $K$. Thus, the problem of finding a completely metrizable widely-connected space, due to Paul Erd\H{o}s and Howard Cook (Problem G3 in \cite{rud3} and \cite{erdosproblem}, and Problem 123 in \cite{cookproblem}), is solved.  The example is not biconnected; we will leave open the question of whether there is a completely metrizable biconnected space without a dispersion point. 

Our method produces another example that answers the following question of Jerzy Mioduszewski: \textit{Does every separable metric widely-connected $W$ have a metric compactification $\gamma W$ such that  every composant of $\gamma W$ intersects $W$ in a finite set?}   This  appears as part of Problem 23 in \cite{miodproblem}. The answer is no, even if the word ``finite'' is replaced with ``countable''.



\section{Assembly of connectible sets}

Let $C$ denote the middle-thirds Cantor set in the interval $[0, 1]$. If $X$ is a subset of $C\times (0,1)$, then $X$ is \textit{connectible} if $\langle c,0\rangle\in A$ whenever $A$ is a clopen subset  of $X\cup (C\times \{0\})$, $c\in C$, and  $A\cap (\{c\}\times (0,1))\neq\varnothing$. There is a natural correlation between connectible subsets of $C\times (0,1)$ and connected subsets of the Cantor fan -- this is the intuition behind the examples in Section 3.  In this section we describe a method for constructing widely-connected spaces from certain connectible sets. 
 
Let $K$ be the Knaster bucket-handle continuum. The rectilinear version of $K$ depicted  in Figure 2 is formed by successively removing open middle-third canals (white regions of the figure), beginning with the unit square $[0,1]^2$.  
 $K\setminus \{\langle 0,0\rangle\}$ is locally homeomorphic to $C\times (0,1)$.  Moreover, if  $\Delta=\{\langle x,x\rangle\in K:x\in[0,1]\}$ is the diagonal Cantor set in $K$, then $K\setminus \Delta$ is the union of $\omega$-many  copies of $C\times (0,1)$, to wit, $K\setminus \Delta=\bigcup _{i<\omega}K_i$ where the $K_i$'s are shown in Figure 3.
 
\begin{figure}[H]
\centering
\hspace{-12mm}
\begin{minipage}{.45\textwidth}
  \centering
   \includegraphics[scale=0.65,trim={0 0 16.1cm 0},clip]{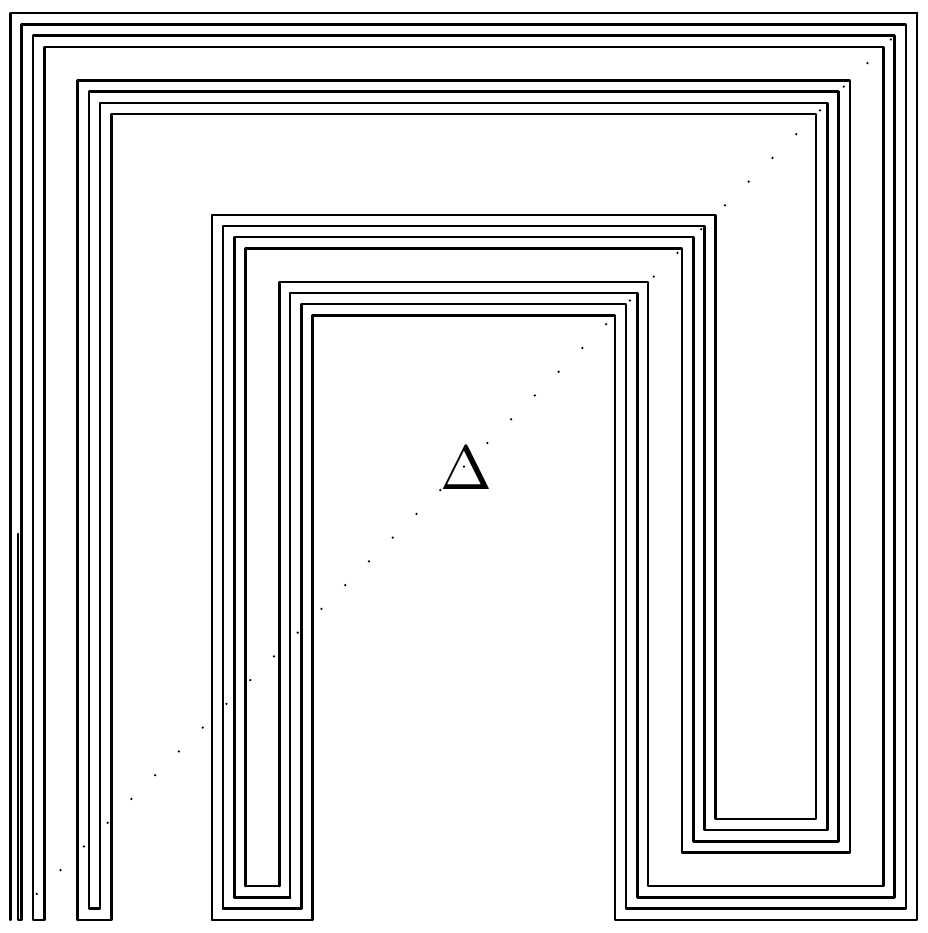} 
  \caption{$K$}
  \label{fig:test1}
\end{minipage}%
\hspace{2mm}
\begin{minipage}{.5\textwidth}
  \centering
  \vspace{.2mm}
  \includegraphics[scale=0.65,trim={12.8cm 0 0 0},clip]{Fig2.pdf} 
  \caption{$K\setminus \Delta$}
  \label{fig:test2}
\end{minipage}
\end{figure}

For each $i<\omega$ let $\varphi_i:C\times(0,1)\to K_i$ be a simple bending homeomorphism that witnesses $K_i\simeq C\times (0,1)$.  If $X$ is a subset of  $C\times (0,1)$ then define$$W[X]=\Delta\cup \bigcup_{i<\omega} \varphi_i [X].$$

\begin{up}\label{one}If $X$ is a dense connectible subset of $C\times (0,1)$, then   $W[X]$ is connected. If, additionally, $X$ is hereditarily disconnected, then $W[X]$ is widely-connected.\footnote{The converse is also true: If $W[X]$ is widely-connected, then $X$ is a dense, connectible, hereditarily disconnected subset of $C\times (0,1)$.}\end{up}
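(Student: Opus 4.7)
My strategy has two parts.

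\textbf{Connectedness of $W[X]$.} I will argue by contradiction, assuming $W[X]=U\sqcup V$ with $U,V$ nonempty clopen. Since $X$ is dense in $C\times(0,1)$, each $\varphi_i[X]$ is dense in $K_i$, so $W[X]$ is dense in $K$. The $K$-closures $\overline{U}^K$ and $\overline{V}^K$ therefore cover $K$, and since $K$ is connected they must share a point $p$. The point $p$ cannot lie in $W[X]$, since it would then lie in one of the clopen pieces while being in the other's closure; hence $p=\varphi_i(c,y)$ for some $i$ and some $(c,y)\in(C\times(0,1))\setminus X$. Because $K_i$ is open in $K$, both $\varphi_i^{-1}[U\cap\varphi_i[X]]$ and $\varphi_i^{-1}[V\cap\varphi_i[X]]$ accumulate at $(c,y)$ in $C\times(0,1)$.

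The next step is to bring connectibility into play. Using the natural continuous extension $\tilde\varphi_i\colon C\times[0,1)\to K$ of $\varphi_i$ that maps $C\times\{0\}$ into $\Delta$, I pull the clopen trace of $U$ on $\varphi_i[X]\cup\tilde\varphi_i[C\times\{0\}]$ back to a clopen set $A\subseteq X\cup(C\times\{0\})$. Connectibility then forces the strand-to-base dichotomy: for each $c'\in C$, the strand $X\cap(\{c'\}\times(0,1))$ lies wholly in $A$ or wholly in $A^c$, in agreement with whether $(c',0)\in A$. The accumulation of both $\varphi_i^{-1}[U\cap\varphi_i[X]]$ and $\varphi_i^{-1}[V\cap\varphi_i[X]]$ at $(c,y)$ then yields $c',c''\in C$ arbitrarily close to $c$ with $(c',0)\in A$ and $(c'',0)\in A^c$. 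But $(c,0)$ lies in one of the two open sets $A,A^c$, and the corresponding $C\times\{0\}$-neighborhood of $(c,0)$ must be on that single side, contradicting the accumulation of the opposite base points.

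\textbf{Widely-connectedness under hereditary disconnectedness.} Assume additionally that $X$ is hereditarily disconnected and take a non-degenerate connected $D\subseteq W[X]$. I look at $\overline{D}^K$, a subcontinuum of the Knaster bucket-handle. Since $K$ is indecomposable and every proper subcontinuum of $K$ is an arc, either $\overline{D}^K=K$---in which case $D$ is dense in $K$, hence in $W[X]$, and we are done---or $\overline{D}^K$ is an arc $A$. In the arc case, $D$ is a connected subset of $A$, hence an interval, and because $|D|\geq 2$ it contains a non-degenerate closed sub-arc $I\subseteq W[X]$. The Cantor set $\Delta$ contains no non-degenerate arcs, so $I$ must meet some $K_i$; since $K_i$ is open in $K$, the open set $I\cap K_i\subseteq\varphi_i[X]$ contains a non-degenerate sub-arc $J$, and $\varphi_i^{-1}[J]$ is a non-degenerate connected subset of $X$, contradicting hereditary disconnectedness.

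\textbf{Main obstacle.} The delicate step in the first part is constructing the clopen subset $A$ of $X\cup(C\times\{0\})$ on which connectibility can be legally invoked; this requires fixing a continuous extension of $\varphi_i$ across the boundary $C\times\{0\}$ and verifying that continuity alone (not injectivity) suffices to make the pullback clopen. The second part rests on the classical structural fact that every proper subcontinuum of the bucket-handle is an arc---this is what funnels the offending sub-arc into a single $\varphi_i[X]$, where it collides with hereditary disconnectedness.
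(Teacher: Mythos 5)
Your proposal is correct and follows essentially the same route as the paper: for connectedness, locate a common boundary point $p\notin\Delta$ of the two clopen pieces inside some $K_i$ and use connectibility to push membership down to the base Cantor set, reaching a contradiction with clopenness at $\langle p(0),0\rangle$; for wide connectedness, use that every non-degenerate proper subcontinuum of $K$ is an arc, which must run into some $K_i$ and produce a non-degenerate connected subset of $X$. The only cosmetic difference is that the paper contradicts closedness via a single convergent sequence $(b_n')$ while you contradict openness using base points from both sides.
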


\begin{proof}Let $X$ be a dense connectible subset of $C\times (0,1)$.

Suppose for a contradiction that  $W:=W[X]$ is not connected.  Let  $A$ and $B$  be non-empty closed subsets of $W$ such that $W=A\cup B$ and $A\cap B=\varnothing$. Then $K=\overline A \cup \overline B$ 
because $W$ is dense in $K$. There exists $p\in \overline A \cap \overline B$ because $K$ is connected. $p\notin \Delta$ because $\Delta\subseteq W$, so there exists $i<\omega$ such that $p\in K_i$. Thinking of $\overline{K_i}$ as $C\times [0,1]$ with $C\times \{0,1\}\subseteq \Delta$, give local coordinates $\langle p(0),p(1)\rangle$ to $p$. Assume that $p':=\langle p(0),0\rangle\in A$. Let $(b_n)$ be a sequence of points in $K_i\cap B$ converging to $p$. Then $b_n':=\langle b_n(0),0\rangle\in B$ for each $n<\omega$ because $X$ is connectible.  The sequence $(b_n')$ converges to $p'$. Since $B$ is closed in $W$, $p'\in B$. We have $p'\in A\cap B$, a contradiction.

Suppose that $W[X]$ is not widely-connected. Let $A$ be a non-dense connected subset of $W$ with more than one point. Every non-degenerate proper subcontinuum of $K$ is an arc, therefore $\overline A$ is an arc. Let $e:[0,1]\hookrightarrow K$ be a homeomorphic embedding such that $e([0,1])=\overline A$. Let $a,b\in A$ with $a\neq b$, and let  $r,s\in [0,1]$ such that $e(r)=a$ and $e(s)=b$. Assume that $r<s$. Since $e^{-1}[A]$ is a connected subset of $[0,1]$ containing $r$ and $s$, we have $[r,s]\subseteq e^{-1}[A]$. Clearly $e([r,s])\not \subseteq \Delta$, so there exists $i<\omega$ such that $e([r,s])\cap K_i\neq\varnothing$.  Then $e^{-1}[K_i]\cap [r,s]$ is a non-empty open subset of $[r,s]$, so it contains an interval $I$. $e[I]$ is a non-degenerate connected subset of $W\cap K_i=\varphi_i[X]$, thus $X$ is not hereditarily disconnected.\end{proof}

\begin{ur} A widely-connected set cannot contain an interval of reals, so every widely-connected subset of $K$ is dense in $K$. \end{ur}

\section{Connectible sets $X_1$ and $X_2$}

Here we describe two connectible sets $X_1$ and $X_2$.  Both sets will be hereditarily disconnected and  dense in $C\times (0,1)$, so that $W[X_1]$ and $W[X_2]$ will be widely-connected.

\begin{ue}Let $C'$ be the set of all endpoints of intervals removed from $[0,1]$ in the process of constructing $C$, and let $C''=C\setminus C'$. Let $$X_1=\big(C'\times \mathbb Q\cap (0,1)\big) \cup \big(C''\times \mathbb P\cap(0,1)\big),$$ where $\mathbb Q$ and $\mathbb P$ are the rationals and irrationals, respectively.  Clearly $X_1$ is hereditarily disconnected and dense in $C\times (0,1)$. The reader may recognize $X_1$ as the Knaster-Kuratowski fan \cite{kk} minus its dispersion point.  Proving that $X_1$ is connectible is essentially the same as proving that the Knaster-Kuratowski fan is connected, so some details have been omitted from the proof below. \end{ue}

\begin{up}\label{lop}$X_1$ is connectible.\end{up}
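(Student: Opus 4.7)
The plan is to imitate the classical Knaster--Kuratowski proof that their fan is connected. Suppose, for a contradiction, that $A$ is a clopen subset of $Y := X_1 \cup (C \times \{0\})$ with $\langle c^*, y^*\rangle \in A$ for some $c^* \in C$ and $y^* \in (0,1)$ but $\langle c^*, 0\rangle \notin A$. Set $B = Y \setminus A$; it is also clopen in $Y$, and contains $\langle c^*, 0\rangle$. The partition $A \cap (C \times \{0\})$, $B \cap (C \times \{0\})$ of $C \times \{0\}$ projects to a clopen decomposition $C = C_A \sqcup C_B$, with $c^* \in C_B$.

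The key structural observation is that the intersection of the closures $\overline A, \overline B$ in $[0,1]^2$ is zero-dimensional. Since $A$ and $B$ are closed in $Y$, $\overline A \cap \overline B$ misses $Y$; since $X_1$ is dense in $C \times (0,1)$, one has $\overline A \cup \overline B = C \times [0,1]$, so
\[
\overline A \cap \overline B \subseteq (C \times \{1\}) \cup (C' \times (\mathbb{P} \cap (0,1))) \cup (C'' \times (\mathbb{Q} \cap (0,1))).
\]
Any connected subset of $\overline A \cap \overline B$ lies in some component $\{c\} \times [0,1]$ of $C \times [0,1]$, where the set above restricts to a totally disconnected vertical slice. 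Thus the components of $\overline A \cap \overline B$ are singletons, and as a compact metric space it is zero-dimensional.

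The rest of the argument parallels KK's proof directly. A convenient formulation sets $\alpha^* := \inf\{y \in (0,1) : \langle c, y\rangle \in A \text{ for some } c \in C_B\}$ and aims to derive a contradiction from $\alpha^* \geq 0$. If $\alpha^* = 0$, then an infimizing sequence $\langle c_n, y_n\rangle \in A$ with $c_n \in C_B$ and $y_n \to 0$ has (after passing to a subsequence) $c_n \to c_\infty \in C_B$, so the limit $\langle c_\infty, 0\rangle$ belongs to $\overline A \cap Y = A$ while simultaneously lying in $C_B \times \{0\} \subseteq B$, contradicting $A \cap B = \varnothing$. The principal obstacle, and the part the author explicitly suppresses, is ruling out $\alpha^* > 0$: by a case analysis on the rational or irrational nature of $\alpha^*$ against the membership of $c_\infty$ in $C'$ or $C''$, one exploits the density of $C'$, $C''$ in $C$ and of $\mathbb{Q}$, $\mathbb{P}$ in $\mathbb{R}$ to show that any small enough open $A$-neighborhood of a suitable point of $A$ at height just above $\alpha^*$ must also contain a $Y$-point on a $C_B$-ray at height strictly below $\alpha^*$, violating the definition of $\alpha^*$. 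This case analysis is the KK-style argument that completes the proof.
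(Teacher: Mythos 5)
Your setup is fine, and your observation that $\overline A\cap\overline B$ misses $Y:=X_1\cup(C\times\{0\})$ and is therefore confined to $(C\times\{1\})\cup(C'\times\mathbb P)\cup(C''\times\mathbb Q)$ is exactly the right structural fact. But the argument you propose for the case $\alpha^*>0$ does not work, and that case is precisely where the content of the proposition lies. If the limit point $\langle c_\infty,\alpha^*\rangle$ happens to lie in $Y$ (e.g.\ $c_\infty\in C''$ and $\alpha^*\in\mathbb P$), then it lies in $A=\overline A\cap Y$, and openness of $A$ in $Y$ gives a box neighborhood reaching below height $\alpha^*$, yielding your contradiction. In the complementary subcases, however ($c_\infty\in C''$ with $\alpha^*$ rational, or $c_\infty\in C'$ with $\alpha^*$ irrational), the limit point is \emph{not} in $Y$, so you cannot invoke openness of $A$ there; and at the nearby points $\langle c_n,y_n\rangle\in A$ with $y_n\downarrow\alpha^*$, the basic neighborhoods $(U_n\times V_n)\cap Y\subseteq A$ may have $V_n$ entirely contained in $(\alpha^*,1)$, so they need not contain any $Y$-point below height $\alpha^*$. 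Density of $\mathbb Q$, $\mathbb P$, $C'$, $C''$ is irrelevant to this obstruction: the problem is that the vertical extent of the neighborhood shrinks, not that $Y$-points are missing from it. No case analysis of the kind you describe closes this gap; even the fiberwise infimum $\beta(c)=\inf\{y:\langle c,y\rangle\in A\}$ always terminates at a point of $\overline A\cap\overline B$ lying outside $Y$, which is consistent with everything you have established.

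The missing ingredient is a Baire category argument, which is also the heart of the classical Knaster--Kuratowski proof you say you are imitating. The paper proceeds as follows: for each rational $q_i\in(0,1)$ let $C_i=\{c\in C:\langle c,q_i\rangle\in\overline A\cap\overline B\}$. Each $C_i$ is closed (by continuity of $c\mapsto\langle c,q_i\rangle$) and nowhere dense (it misses the dense set $C'$, since $\langle c,q_i\rangle\in Y$ for $c\in C'$ while $\overline A\cap\overline B\cap Y=\varnothing$). Since $C'$ is countable, the Baire Category Theorem gives a point $d\in U\cap C''\setminus\bigcup_{i<\omega}C_i$, where $U$ is a clopen set with $U\times\{0\}\subseteq B$ and $X_1\cap(U\times V)\subseteq A$ for some open $V$. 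For such a $d$ the fiber $\{d\}\times[0,1)$ meets both $A$ and $B$ but misses $\overline A\cap\overline B$ entirely (the only candidates on that fiber are the rational heights, all of which have been excluded), so $\overline A$ and $\overline B$ form a nontrivial clopen partition of the connected set $\{d\}\times[0,1)$ --- the desired contradiction. You should replace your infimum argument with this (or an equivalent category argument); without it the proof is incomplete.
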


\begin{proof}[Sketch of proof]  Let $A$ be a non-empty clopen subset of $X_1\cup C\times \{0\}$ and let $\langle c,r\rangle\in A$. Suppose for a contradiction that $\langle c,0\rangle\in B:=X_1\setminus A$. There are open sets $U\subseteq C$ and $V\subseteq (0,1)$ such that $\langle c,0\rangle\in U\times \{0\}\subseteq B$ and $\langle c,r\rangle\in X_1\cap (U\times V)\subseteq A$. Enumerate  $\mathbb Q \cap (0,1)=\{q_i:i<\omega\}$.  For each $i<\omega$ let \[C_i=\big\{c\in C:\langle c,q_i\rangle\in \overline A\cap \overline B\big\}.\]
Each $C_i$ closed and nowhere dense in $C$. By the Baire Category Theorem, $C\setminus (C'\cup \bigcup _{i<\omega} C_i)$ is dense, so there exists $d\in U\cap C''\setminus \bigcup _{i<\omega} C_i$.    $\overline A$ and $\overline B$ form a non-trivial clopen partition of the connected set $\{d\}\times[0,1)$, a contradiction.\end{proof}

\begin{ue2} Define $\|\cdot \|:\mathbb R ^\omega \to [0,\infty]$ by  \[\|x\|=\sqrt{\sum _{i=0} ^\infty x_i ^2}.\]  The Hilbert space $\ell ^2$ is the set $\{x\in \mathbb R ^\omega:\|x\|<\infty\}$ with the norm topology generated by  $\|\cdot\|$. The subspace $\mathfrak E:=\{x\in \ell^2:x_i\in \{0\}\cup \{1/n:n\in\mathbb N\} \text{ for each }i<\omega\}$ of $\ell ^2$  is called the \textit{complete Erd\H{o}s space} ($\mathfrak E$ is closed subset of the completely metrizable $\ell^2$, and is therefore complete). Let $f:[0,\infty)\to [0,1]$ be the function with the following graph.

\begin{figure}[H]
  \centering
  \includegraphics[scale=.5]{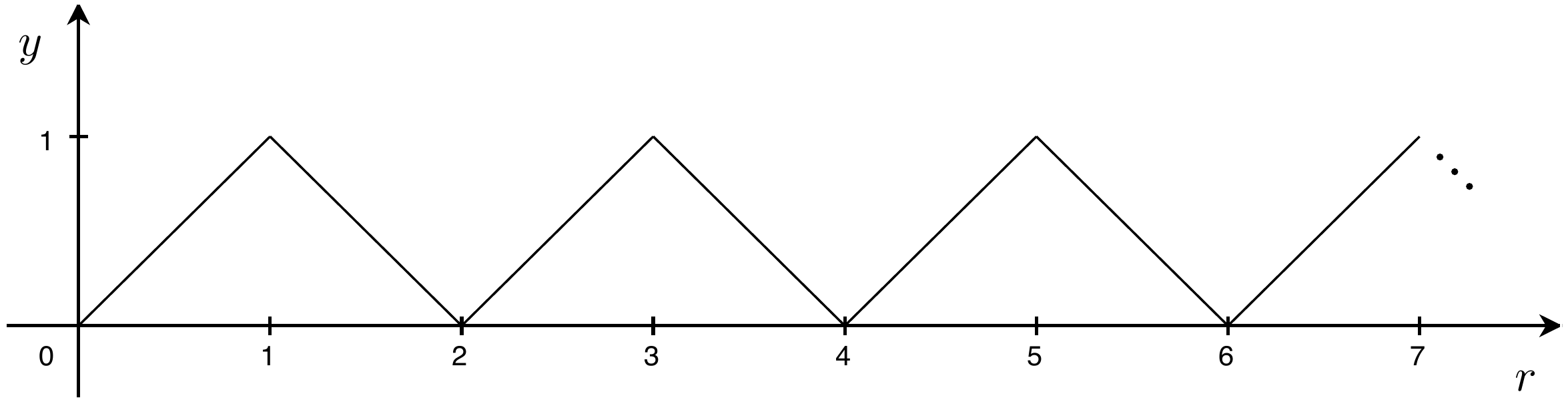} 
   \caption{$y=f(r)$}
\end{figure}

\noindent The Cantor set is the unique zero-dimensional compact metric space without isolated points, therefore  $ (\{0\}\cup \{1/n:n\in\mathbb N\})^\omega\simeq C$. We will think of these spaces as being equal. Define $$\xi: \mathfrak E\to C\times [0,1] \text{ by  }x\mapsto \langle x,f\|x\|\rangle.$$ Note that $\xi$ is continuous, as the inclusion $\mathfrak E \hookrightarrow  (\{0\}\cup \{1/n:n\in\mathbb N\})^\omega$ is continuous and $f$ and $\|\cdot\|$ are both continuous. Let \[X_2=\xi[\mathfrak E]\setminus (C\times \{0,1\}).\] Since $X_2$ has at most one point from each fiber $\{c\}\times (0,1)$, every two points in $X_2$ can be separated by a clopen partition of $X_2$. That is, $X_2$ is \textit{totally disconnected}, a stronger property than hereditarily disconnected. \end{ue2}

\begin{ur} If instead $f:[0,\infty)\to [0,1]$ is defined by $f(r)=1/(1+r)$, then $\xi$ is a homeomorphic embedding (see \cite{rob} and \cite{eng1} 6.3.24). By adding $C\times \{0\}$ to $\xi[\mathfrak E]$ and then contracting it to a point, one obtains a completely metrizable dispersion point space. However, in this case $\xi[\mathfrak E]$ is nowhere dense in $C\times (0,1)$. We will see that the sinusoidal $f$ makes $\xi$ a dense embedding, but destroys completeness.\end{ur}

\begin{up}$X_2$ is dense in $C\times (0,1)$.\end{up}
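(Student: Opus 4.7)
The plan is to show every basic open set $U\times V\subseteq C\times (0,1)$ meets $X_2$; equivalently, for a given $\langle c,y\rangle\in U\times V$ I will produce $x\in \mathfrak E$ with $x\in U$ and $f(\|x\|)\in V$. The two coordinates of $\xi(x)=\langle x,f(\|x\|)\rangle$ are controlled by decoupled data: membership in $U$ depends only on finitely many coordinates of $x$ (product topology on $C$), while $f(\|x\|)$ depends on the $\ell^2$ norm, which I tune with the tail of $x$.

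First I shrink $U$ to a cylinder $U=\{c'\in C:c_i'=c_i\text{ for }i<N\}$ and set $x_i=c_i$ for $i<N$; this fixes a finite partial sum $s_0:=\sum_{i<N}c_i^2$. The remaining coordinates $(x_i)_{i\ge N}$ may be chosen freely from $\{0\}\cup\{1/n:n\in\mathbb N\}$, and so long as only finitely many of them are nonzero, $x$ will automatically lie in $\mathfrak E\cap U$.

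Next, I use the sinusoidal shape of $f$: every non-empty open $V\subseteq (0,1)$ is attained by $f$ infinitely often, so $f^{-1}(V)$ is an open unbounded subset of $[0,\infty)$. Choose an open interval $I\subseteq f^{-1}(V)$ with $\inf I>\sqrt{s_0}$, so that $\{r^2-s_0:r\in I\}$ is a non-empty open subset of $(0,\infty)$. Now I observe that the set $T$ of finite sums $\sum_{k=1}^K 1/n_k^2$ (with repetition permitted, and $0\in T$ via the empty sum) is dense in $[0,\infty)$: given $t\ge 0$ and $\varepsilon>0$, pick $m$ with $1/m^2<\varepsilon$ and take $K=\lfloor tm^2\rfloor$ copies of $1/m^2$ to land within $\varepsilon$ of $t$. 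Hence there is $t^*=\sum_{k=1}^K 1/n_k^2\in T$ and $r\in I$ with $s_0+t^*=r^2$. Placing the values $1/n_1,\ldots,1/n_K$ at indices $N,\ldots,N+K-1$ and zeros elsewhere yields $x\in\mathfrak E\cap U$ with $\|x\|^2=r^2$, whence $f(\|x\|)=f(r)\in V\subseteq (0,1)$ and $\xi(x)\in (U\times V)\cap X_2$.

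The main obstacle is marrying the product topology on $C$ with the norm structure on $\ell^2$; once one notices that matching only finitely many coordinates of $c$ leaves infinitely many coordinates of $x$ free to hit a dense set of possible $\|x\|^2$ values, the argument reduces to the oscillation of $f$, which is precisely the property that fails for the monotone choice $f(r)=1/(1+r)$ discussed in the preceding remark.
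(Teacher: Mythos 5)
Your proof is correct, and it follows the same skeleton as the paper's: fix the first $N$ coordinates to force membership in $U$, then use the free tail to steer $\|x\|$ into $f^{-1}(V)$, relying on the fact that $f$ attains every value of $(0,1)$ at arbitrarily large arguments (both arguments take this from the sinusoidal shape of $f$). The two proofs diverge at the key arithmetic step. The paper realizes a target norm \emph{exactly}: given $r\geq 0$ it writes $r$ as a telescoping series of positive rationals $a_i/b_i$ and appends, for each $i$, a block of $a_ib_i$ copies of $1/b_i$, producing an infinite tail with $\sum y_i^2=r$ exactly; this lets it hit a prescribed height $c\in(a,b)$ on the nose. You instead use only finitely supported tails, observe that the attainable increments $\sum_{k}1/n_k^2$ form a set that is merely dense in $[0,\infty)$, and absorb the resulting error using the openness of $V$ and the continuity of $f$ (via an open interval $I\subseteq f^{-1}(V)$ lying beyond $\sqrt{s_0}$). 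Your version is slightly more elementary --- no exact representation of arbitrary reals as sums of reciprocal squares is needed --- at the cost of proving slightly less: the paper's construction shows that for every single $c\in(0,1)$ the set $\{x\in\mathfrak E:f\|x\|=c\}$ is dense in $C$, whereas yours yields only the density of $X_2$ in $C\times(0,1)$, which is of course all the proposition asks for.
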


\begin{proof}Let $U\times (a,b)$ be a non-empty open subset of $C\times (0,1)$. Assume that $U$ is a basic open subset of $(\{0\}\cup \{1/n:n\in\mathbb N\})^\omega$; $U=\prod _{i<\omega} U_i$ and $n=\max\{i<\omega:U_i\neq X\}$.  For each $i\leq n$ choose $x_i\in U_i$.  Let $c\in (a,b)$. There exists $p\in  f^{-1} \{c\}$ such that $p^2>\sum_{i\leq n} x_i ^2$. Let $r=p^2-\sum_{i\leq n} x_i ^2$. There is an increasing sequence  of rationals $(q_i)$ such that $q_0=0$ and $q_i\to r$ as $i\to\infty$. Each $q_{i+1}-q_i$ is a positive rational number $\frac{a_i}{b_i}$ for some $a_i,b_i\in \mathbb N$. For each $i<\omega$ let $y^i\in\{\frac{1}{b_i}\}^{a_i\cdot b_i}$ be the finite sequence of $a_i\cdot b_i$  repeated entries $\frac{1}{b_i}$. Let $y={y^0} ^{\;\frown} {y^1} ^{\;\frown} {y^2} ^{\;\frown} ...$ be the sequence in  $\{1/n:n\in\mathbb N\}^\omega$ whose first $a_0\cdot b_0$ coordinates are $\frac{1}{b_0}$, whose next $a_1\cdot b_1$ coordinates are $\frac{1}{b_1}$, etc. Note that $\sum_{i=0}^\infty y_i^2=\sum_{i=0}^\infty q_{i+1}-q_i=r$. Now put $z:=(x\restriction n+1)^\frown y=\langle x_0,...,x_n,y_0,y_1,...\rangle$. We have $z\in U$ and $\|z\|=p$, so that  $\xi (z)=\langle z,c \rangle \in U\times (a,b)$.\end{proof}

The following is due to Erd\H{o}s \cite{erd}.

\begin{up}\label{bound}If $A$ is a non-empty clopen subset of $\mathfrak E$, then $\{\|x\|:x\in A\}$ is unbounded. \end{up}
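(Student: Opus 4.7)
I plan to argue by contradiction. Suppose $A\subseteq\mathfrak E$ is nonempty, clopen, and $M:=\sup\{\|x\|:x\in A\}$ is finite; the goal is to produce $z\in A$ with $\|z\|>M$.

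The technical heart of the argument is a \emph{local norm-bumping lemma}: for every $y\in\mathfrak E$ and every sufficiently small $\epsilon>0$, the ball $B(y,\epsilon)\cap\mathfrak E$ contains a point $z$ with $\|z\|^2\ge\|y\|^2+c\epsilon^2$ for a universal constant $c>0$. This is proved by an explicit modification of a single coordinate: since $y\in\ell^2$, some $y_n$ is very small, and for small enough $\epsilon$ there is a positive integer $k$ with $1/k\in(\epsilon/3,\epsilon/2)$; replacing $y_n$ by such a value (or, if $y_n\ne 0$, by the smallest $1/k$ exceeding $y_n+\epsilon/3$) produces a point of $B(y,\epsilon)\cap\mathfrak E$ whose squared norm exceeds $\|y\|^2$ by at least $\epsilon^2/9$.

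Applied at each $y\in A$ with $\epsilon=r(y):=d(y,\mathfrak E\setminus A)>0$, the lemma yields the pointwise inequality $\|y\|^2+cr(y)^2\le M^2$, so $r(y)\to 0$ as $\|y\|\to M$. If I can produce $a^\ast\in A$ with $\|a^\ast\|=M$, then openness of $A$ forces $r(a^\ast)>0$, and one further application of the lemma at $a^\ast$ gives a point of $A$ with squared norm exceeding $M^2$, the desired contradiction.

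To manufacture $a^\ast$ I would combine Baire category with the completeness of $\mathfrak E$. The space $A$ is Polish (as a closed subset of the complete $\mathfrak E$), and $A=\bigcup_{k\in\mathbb N}\{y\in A:r(y)\ge 1/k\}$ is a countable cover by closed subsets; Baire then furnishes a nonempty open $V\subseteq A$ on which $r$ is bounded below by some fixed $\eta>0$. Inside $V$ the bumping lemma can be iterated: every stage produces a point still in $A$ (since $r\ge\eta$ there) and gains a uniform $c\eta^2$ in squared norm, while the displacements are taken small enough to form a Cauchy sequence with limit $a^\ast\in A$ realizing $M$. The main obstacle is that iterates may leave $V$, where the uniform lower bound on $r$ fails; I expect to resolve this by shrinking $V$ to a sufficiently small ball (via a further Baire refinement inside $V$, or a direct geometric estimate exploiting that $\|y_0\|^2\ge M^2-c\eta^2$ already holds near the supremum) so that the iteration is confined to the uniform-thickness region throughout, which closes the argument.
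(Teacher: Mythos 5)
Your local ``norm-bumping lemma'' is correct and is essentially the same device the paper uses (inserting a reciprocal $1/k$ into a coordinate where the entry is small), and the reduction to it is fine: for $y\in A$ the bump stays in $B(y,r(y))\subseteq A$, giving $\|y\|^2+c\,r(y)^2\le M^2$. The genuine gap is in manufacturing $a^\ast\in A$ with $\|a^\ast\|=M$. The supremum need not be attained ($A$ is closed but not compact, and $\mathfrak E$ is not locally compact, so a norm-bounded sequence need not have a convergent subsequence), and your proposed mechanism for attaining it is self-defeating: your own inequality $\|y\|^2+c\,r(y)^2\le M^2$ shows that on any region where $r\ge\eta$ the norms are bounded by $\sqrt{M^2-c\eta^2}<M$, so an iteration ``confined to the uniform-thickness region throughout'' can never have norms approaching $M$. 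Conversely, an iteration whose norms do approach $M$ must have $r(y^k)\to 0$, whence the step sizes $\epsilon_k\le r(y^k)$ satisfy only $\sum\epsilon_k^2<\infty$ (from the bounded total gain), which does not make the sequence Cauchy. Also note the two claims in your iteration step --- a uniform gain of $c\eta^2$ per stage and displacements small enough to be Cauchy --- are mutually exclusive: a uniform gain with norms bounded by $M$ permits only finitely many stages.

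The paper closes exactly this gap by a different global organization: it never tries to realize the supremum. Instead, starting from $a^0\in A$ it inserts copies of $1/k$ into a \emph{fresh block} of coordinates at stage $k$, continuing until one more insertion would exit $A$; the norm bound forces this within $kN$ insertions, so $d(a^k,\mathfrak E\setminus A)\le 1/k$. Because the modifications have pairwise disjoint supports and the partial norms stay $\le N$, the coordinatewise limit $a$ lies in $\ell^2$ and $d(a^k,a)\to 0$ as the tail of a convergent series; hence $a\in\overline A\cap\overline{\mathfrak E\setminus A}$ and $A$ is not clopen. Your argument can be repaired along the same lines --- perform each bump on a previously untouched coordinate so that convergence of $(y^k)$ is automatic, and derive the contradiction from $r(a)=\lim r(y^k)=0$ with $a\in A$ open, rather than from attaining $M$ --- but as written the plan is missing both the disjoint-support idea that forces convergence and the correct target (the boundary of $A$, not the norm supremum).
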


\begin{proof}Let $A$ be a non-empty subset of $\mathfrak E$ such that $\{\|x\|:x\in A\}$ is bounded. We show $A$ has non-empty boundary. 

Let $N<\omega$ such that $\|x\|< N$ for each $x\in A$, and let $a^0\in A$. Define $a^1$ as follows. There is a least $j\in [1,N]$ such that $\langle 1,1,...,1,a^0_{j},a^0_{j+1},...\rangle \in \mathfrak E\setminus A$ (replacing $a^0_i$ with $1$ for each $i<j$). Let $a^1 _i =1$ if $i<j-1$ and $a^1 _i=a^0_i$ otherwise. Then $a^1\in A$ and $d(a^1,\mathfrak E \setminus A)\leq 1$. Let $j_0=0$ and $j_1=j$. 

Suppose $k>1$ and $a^n\in A$ and increasing integers $j_n$ have been defined, $n<k$, such that  $d(a^n,\mathfrak E \setminus A)\leq 1/n$ and $a^n _i =a^0_i$ for $i\geq j_n$. 
There exists $j'>j_{k-1}$ such that $a^0_i<1/k$ whenever $i\geq j '$. There is a least $j\in [1,kN]$ such that $$\langle a^{k-1} _0, ...,a^{k-1}_{j'-1}, 1/k,1/k,...,1/k, a^{k-1}_{j'+j},...\rangle \in \mathfrak E\setminus A.$$  Let $j_k=j'+j$. Define $a^k$ by letting $a^k _i=1/k$ if $j'-1<i<j_k-1$ and   $a^k _i=a^{k-1} _i$ otherwise. 

Finally, define $a$ by setting it equal to $a^k$ on $[j_{k-1},j_{k}]$, $k\in\mathbb N$.  $\|a\|\leq N$, otherwise there is a finite sum $\sum_{i=0} ^n a_i ^2$ greater than $N^2$, but then $\|a^k\|>N$ if $k>n$. Thus $d(a^0,a)\leq 2N$, and so $\sum_{i=n} ^\infty (a^0_i-a_i)^2 \to 0$ as $n\to\infty$. So $d(a^k,a)\to 0$ as $k\to\infty$. Therefore $a$ is a limit point of $A$. Also, $d(a,\hat a^k)\leq d(a, a^k)+d( a^k,\hat a ^k)$, where $\hat a^k$ is equal to $a^k$ with $a^k_{j_k-1}$ increased to $1/k$ (so $\hat a ^k \in \mathfrak E\setminus A$). By construction $d(a^k,\hat a^k)<1/k$, so it follows that $d(a,\hat a^k)\to 0$ as $k\to\infty$. Therefore $a$ is also a limit point of $\mathfrak E \setminus A$.  
\end{proof}

\begin{up}$X_2$ is connectible.\end{up}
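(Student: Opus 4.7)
The plan is to argue by contradiction. Suppose $A$ is a clopen subset of $X_2 \cup (C \times \{0\})$, $\langle c, r\rangle \in A$ for some $r \in (0, 1)$, but $\langle c, 0\rangle \in B := (X_2 \cup (C \times \{0\})) \setminus A$. Since $r \notin \{0, 1\}$, there is a unique $x \in \mathfrak{E}$ with $\xi(x) = \langle c, r\rangle$, so $x = c$ as a sequence and $f\|x\| = r$. Because $B$ is open in $X_2 \cup (C \times \{0\})$ and contains $\langle x, 0\rangle$, I would fix a basic clopen neighborhood $U \subseteq C$ of $x$ (determined by the first $n+1$ coordinates of $x$) and $\epsilon \in (0, r)$ so that $(U \times [0, \epsilon)) \cap (X_2 \cup (C \times \{0\})) \subseteq B$.

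Next, work inside $\mathfrak{E}_U := \{y \in \mathfrak{E} : y \in U\}$. The tail map $y \mapsto \langle y_{n+1}, y_{n+2}, \ldots\rangle$ identifies $\mathfrak{E}_U$ with a copy of $\mathfrak{E}$ whose squared norms differ from those in $\mathfrak{E}_U$ only by the additive constant $\sum_{i \leq n} x_i^2$, so Proposition~\ref{bound} transfers verbatim to $\mathfrak{E}_U$. Define $A^* := \{y \in \mathfrak{E}_U : \xi(y) \in A\}$ and $B^* := \{y \in \mathfrak{E}_U : \xi(y) \in B\}$. Both are open in $\mathfrak{E}_U$: for $y \in A^*$ one has $f\|y\| < 1$, so an open neighborhood of $\xi(y)$ in $C \times [0, 1)$ mapping into $A$ pulls back via $\xi$ to an open neighborhood of $y$ in $A^*$. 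Note $A^* \cup B^* = \{y \in \mathfrak{E}_U : f\|y\| \neq 1\}$ and $x \in A^*$.

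The central step is an Erd\H{o}s-style sequential construction mimicking the proof of Proposition~\ref{bound}. Starting from $a^0 := x$, I would inductively build $a^k \in A^*$ with $d(a^k, B^*) \leq 1/k$ by forcing trailing coordinates to $1/k$: at step $k$, consider the family of points obtained from $a^{k-1}$ by replacing coordinates $a^{k-1}_{n+j'+1}, \ldots, a^{k-1}_{n+j}$ with $1/k$ for growing $j$. The resulting norms tend to $\infty$, and because $f$ oscillates below $\epsilon$ on arbitrarily large arguments, eventually $\xi$ of the modified point lands in $U \times [0, \epsilon) \subseteq B$. Taking the least such $j$ yields a $B^*$-point adjacent to an $A^*$-candidate differing in a single coordinate, which becomes $a^k$. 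The sequence $(a^k)$ converges to some $a \in \mathfrak{E}_U$ which is simultaneously a limit of $A^*$ and of $B^*$; continuity of $\xi$ together with the fact that $A$ and $B$ are closed in $X_2 \cup (C \times \{0\})$ then forces $\xi(a) \in A \cap B$, which is the desired contradiction.

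The main obstacle is the peak set $P := \{y \in \mathfrak{E}_U : f\|y\| = 1\}$, which lies outside $A^* \cup B^*$ and can host spurious limits of $A^*$-sequences: since $\xi$ maps $P$ into $C \times \{1\}$, outside $X_2 \cup (C \times \{0\})$, the closedness of $A$ in that space gives no handle. Handling this requires exploiting the specific sinusoidal graph of $f$ in Figure~3, namely that $f^{-1}\{1\}$ is discrete; then the coordinate modifications at each step of the induction can be nudged (by toggling a single later coordinate between $0$ and $1/m$ for large $m$) to keep every intermediate norm off of $f^{-1}\{1\}$, ensuring each $a^k$ genuinely lies in $A^*$ and that the limit $a$ satisfies $f\|a\| < 1$ so that $\xi(a) \in X_2 \cup (C \times \{0\})$, as the contradiction requires.
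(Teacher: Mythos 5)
There is a genuine gap, and it sits at the heart of your argument: the sequence $(a^k)$ you construct need not converge. In Erd\H{o}s's proof of Proposition \ref{bound} the hypothesis that $\{\|x\|:x\in A\}$ is bounded by $N$ is used twice --- it guarantees that each round of coordinate modifications exits $A$ after boundedly many steps, and, more importantly, it is what makes the diagonal point $a$ lie in $\ell^2$ and be the limit of the $a^k$ (the proof explicitly derives $\|a\|\leq N$ and $d(a^0,a)\leq 2N$ before concluding $d(a^k,a)\to 0$). Your set $A^*$ carries no norm bound, and your own description of step $k$ shows why this is fatal: the only event guaranteed to eject the modified point from $A^*$ is that $f\|\cdot\|$ drops below $\epsilon$, which forces the norm up to (roughly) the next even integer. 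So $\|a^k\|$ is nondecreasing and may increase without bound, in which case $(a^k)$ is not Cauchy, there is no limit $a\in\mathfrak E_U$, and no point of $\overline{A^*}\cap\overline{B^*}$ is produced. Merely exhibiting, for each $k$, a point of $A^*$ within $1/k$ of $B^*$ proves nothing: two disjoint open sets can approach each other arbitrarily closely without sharing a boundary point.

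The missing idea is the one that lets the paper \emph{reduce} to Proposition \ref{bound} instead of re-proving it. You already hold the key fact: since $(U\times[0,\epsilon))\cap(X_2\cup(C\times\{0\}))\subseteq B$ and $f$ vanishes at even integers, every $x\in U\cap\mathfrak E$ with $\|x\|$ an even integer satisfies $\xi(x)\in U\times\{0\}\subseteq B$. Hence, for an even integer $n>\|a^0\|$, the sets $\xi^{-1}\big[A\cap (U\times[0,1])\big]\cap\{x:\|x\|<n\}$ and $\xi^{-1}\big[A\cap (U\times[0,1])\big]\cap\{x:\|x\|\leq n\}$ coincide; the first is open and the second closed, so their common value is a non-empty clopen subset of $\mathfrak E$ whose norms are bounded by $n$ --- contradicting Proposition \ref{bound} directly, with no new sequential construction. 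Your concern about the fiber $C\times\{1\}$ is legitimate but secondary: it is dispatched by taking $f$ with range in $[0,1)$ (which the construction permits and the intended sinusoidal graph suggests), and in any case it is not the obstruction that breaks your argument.
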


\begin{proof}Suppose that $A$ is a clopen subset of $X_2 \cup (C\times \{0\})$, $\langle a,f\|a\| \rangle \in A$, and $\langle a,0\rangle\in \xi[\mathfrak E]\setminus A$. There is a clopen $U\subseteq C$ such that $$\langle a,0\rangle\in \xi[\mathfrak E]\cap (U\times \{0\})\subseteq \xi[\mathfrak E]\setminus A, $$ so that $\{\langle x,f\|x\|\rangle  :x\in U\text{ and } \|x\| \text{ is even}\}\subseteq \xi[\mathfrak E] \setminus A$. Let $n$ be an even integer greater than $\|a\|$. Then 
$$\xi^{-1}\big[A\cap U\times [0,1]\big]\cap \{x\in \mathfrak E:\|x\|<n\}=\xi^{-1}\big[A\cap U\times [0,1]\big]\cap \{x\in \mathfrak E:\|x\|\leq n\}$$
 is a non-empty (it contains $a$) clopen subset of $\mathfrak E$.  Its set of norms is bounded above (by $n$). This contradicts Proposition \ref{bound}.\end{proof}

\section{Answer to Mioduszewski's question}

Here we provide a negative answer to the question: \textit{Does every separable metric widely-connected $W$ have a metric compactification $\gamma W$ such that  every composant of $\gamma W$ intersects $W$ in a finite set?}.  This is Question 23(c)(2) in \cite{miodproblem}.

Recall that if $X$ is a continuum and $P\subseteq X$, then $P$ is a \textit{composant} of $X$ if there exists $p\in X$ such that $P$ is the union of all proper subcontinua of $X$ that contain $p$.\footnote{The composant of $K$ containing the point $\langle 0,0\rangle$  is a dense one-to-one continuous image of the half-line $[0,\infty)$. It is partially visible in Figure 2. Every other composant of $K$ (there are $\mathfrak c=2^\omega$ of them) is a dense one-to-one continuous image of the real line $(-\infty,\infty)$. } In the proof below, we also need Lavrentieff's Theorem: If $M$ and $N$ are completely metrizable spaces, $A\subseteq M$, $B\subseteq N$, and $h:A\to B$ is a homeomorphism, then $h$ extends to a homeomorphism between $G_\delta$-sets containing $A$ and $B$. More precisely, there is a $G_\delta$-in-$M$-set $\tilde A\supseteq A$, a $G_\delta$-in-$N$-set $\tilde B\supseteq B$, and a homeomorphism $\tilde h:\tilde A\to \tilde B$ such that $\tilde h \restriction A=h$.

\begin{ut}\label{hp}If $\gamma W$ is a metric compactification of $W:=W[X_1]$, then there is a composant $P$ of $\gamma W$ such that $\abs{W\cap P}=\mathfrak c$.\end{ut}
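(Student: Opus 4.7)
The plan is to apply Lavrentieff's theorem to the identity on $W$, viewed as lying inside both $K$ and $\gamma W$, to obtain a homeomorphism $h:M\to N$ extending the identity, with $W\subseteq M\subseteq K$ a $G_\delta$ of $K$ and $W\subseteq N\subseteq \gamma W$ a $G_\delta$ of $\gamma W$. The goal is to locate some $c\in C''$ and $i<\omega$ with the closed fiber arc $A=\varphi_i(\{c\}\times[0,1])$ entirely contained in $M$; then $h(A)$ is a subcontinuum of $\gamma W$ with $h(A)\cap W=A\cap W=\varphi_i(\{c\}\times\mathbb P)\cup\{\varphi_i(\langle c,0\rangle),\varphi_i(\langle c,1\rangle)\}$, of cardinality $\mathfrak c$. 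Since this trace is concentrated on one fiber and is not dense in $W$, the closed set $h(A)$ cannot be dense in $\gamma W$, so it is a proper subcontinuum; the composant $P$ of $\gamma W$ containing $h(A)$ then satisfies $|W\cap P|\geq\mathfrak c$, and hence $|W\cap P|=\mathfrak c$.

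The Baire category step that produces $c$ is the heart of the proof. Because $M\supseteq W$ is a $G_\delta$ in the compact metric space $K$, its complement is a meager $F_\sigma$, so one may write $K\setminus M=\bigcup_{n<\omega} F_n$ with each $F_n$ closed and nowhere dense in $K$; and $F_n\subseteq K\setminus W$ for every $n$. Fix $i<\omega$, and for each $q\in\mathbb Q\cap(0,1)$ and $n<\omega$ set
\[B_{q,n,i}:=\{c\in C:\varphi_i(\langle c,q\rangle)\in F_n\},\]
a closed subset of $C$. The decisive observation is that for rational $q$ we have $X_1\cap(C\times\{q\})=C'\times\{q\}$, so $\varphi_i(C'\times\{q\})\subseteq W$ is disjoint from $F_n$; hence $B_{q,n,i}\subseteq C''$. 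Because $C'$ is dense in $C$, the set $C''$ has empty interior in $C$, and therefore $B_{q,n,i}$ is closed nowhere dense in $C$. The countable union $\bigcup_{q\in\mathbb Q,\,n<\omega}B_{q,n,i}$ is meager in $C$, and $C''$ is also comeager in $C$; by the Baire category theorem there is some $c\in C''$ lying outside every $B_{q,n,i}$. For this $c$, at each rational height $q$ we have $\varphi_i(\langle c,q\rangle)\in M$; at each irrational height $q$ we have $\varphi_i(\langle c,q\rangle)\in X_1\subseteq W\subseteq M$; and the two endpoints of $A$ lie in $\Delta\subseteq W\subseteq M$. Thus $A\subseteq M$, and the transfer via $h$ delivers the desired composant.

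The main obstacle is that Baire category step. A priori one knows only that $K\setminus M$ is a meager $F_\sigma$ inside $K\setminus W$, and its projection onto the $C$-factor of $K_i$ could cover $C$. The specific design of $X_1$---placing rationals over $C'$ and irrationals over $C''$---is what forces each $F_n$ to miss $\varphi_i(C'\times\{q\})$ on rational slices and hence makes the $c$-projection sit in the empty-interior set $C''$. Combined with the comeagerness of $C''$ in $C$, this is exactly what is needed for the Baire argument to produce $c$.
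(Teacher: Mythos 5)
Your proof is correct and follows essentially the same route as the paper's: a Lavrentieff extension (the paper extends $\varphi_0\restriction X_1$ from $C\times(0,1)$ into $\gamma W$ rather than the identity on all of $W\subseteq K$), followed by a Baire category argument over $C$ exploiting the placement of $C'$ at rational heights to produce a $c\in C''$ whose entire fiber lies in the $G_\delta$ extension domain, whence an arc whose composant meets $W$ in $\mathfrak c$ points. Your decomposition of the complement into closed nowhere dense sets and the paper's observation that each rational slice of $Z$ is a dense $G_\delta$ are dual formulations of the same Baire step.
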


\begin{proof}Let $\varphi_0:C\times (0,1)\to K_0$ be the homeomorphism defined in Section 2. By Lavrentieff's Theorem, there is a $G_\delta$-in-$C\times (0,1)$-set $Z\supseteq X_1$ and a homeomorphic embedding $$h:=\widetilde{ \varphi_0\restriction X_1}:Z\hookrightarrow \gamma W$$ such that $h\restriction X_1=\varphi_0\restriction X_1$.  Let $\pi$ be the first coordinate projection in $C\times (0,1)$ and let $\{q_i:i<\omega\}$ enumerate $\mathbb Q \cap (0,1)$. Note that $$C''\cap \bigcap_{i<\omega}\pi [Z\cap C\times \{q_i\}]$$ is non-empty because it is a countable intersection of dense $G_\delta$'s in $C$ (see 1.4.C(c) in \cite{eng1}). So there exists $c\in C''$ such that $\{c\}\times [0,1]\subseteq Z$. If $P$ is the composant of $\gamma W$ containing the arc $h\big[\{c\}\times [0,1]\big]$, then $W\cap P$ contains a copy of the irrationals, so that  $\abs{W\cap P}=\mathfrak c$.\end{proof}

 
\section{Necessary conditions for completeness}

\begin{up}\label{comp}$W[X]$ is completely metrizable if and only if the same is true of $X$.\end{up}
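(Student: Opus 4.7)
The approach is to use the standard fact that a separable metrizable subspace of a Polish space is completely metrizable if and only if it is a $G_\delta$ in the ambient space. Since both $C\times (0,1)$ and $K$ are Polish, Proposition \ref{comp} reduces to showing that $X$ is $G_\delta$ in $C\times (0,1)$ if and only if $W[X]$ is $G_\delta$ in $K$. A preliminary observation, needed for both directions, is that each $K_i$ is open in $K$: the stated fact that $K\setminus \{\langle 0,0\rangle\}$ is locally homeomorphic to $C\times (0,1)$ means every non-diagonal $p\in K_i$ admits a neighborhood in $K$ with local product structure, and that neighborhood must lie inside the unique piece $K_i$ containing $p$.

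Assume first that $W[X]$ is completely metrizable. Since $K_0$ is open in $K$, the set $\varphi_0[X]=W[X]\cap K_0$ is open in $W[X]$, and therefore inherits complete metrizability. Because $\varphi_0$ is a homeomorphism, $X\simeq \varphi_0[X]$ is completely metrizable.

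Conversely, assume $X=\bigcap_{n<\omega}U_n$ with each $U_n$ open in $C\times (0,1)$, and set $Y_n:=(C\times (0,1))\setminus U_n$. Then
\[
K\setminus W[X]=\bigcup_{i<\omega}(K_i\setminus \varphi_i[X])=\bigcup_{i,n<\omega}\varphi_i[Y_n].
\]
Each $\varphi_i[Y_n]$ is closed in $K_i$; since $K_i$ is open in $K$, write $K_i=\bigcup_m F_{i,m}$ with $F_{i,m}$ closed in $K$, and then $\varphi_i[Y_n]=\bigcup_m(\varphi_i[Y_n]\cap F_{i,m})$ is $F_\sigma$ in $K$. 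Hence $K\setminus W[X]$ is $F_\sigma$ in $K$, so $W[X]$ is $G_\delta$ in $K$ and completely metrizable. The only substantive step in the whole argument is the openness of each $K_i$ in $K$; granted that, the rest is routine $G_\delta$/$F_\sigma$ bookkeeping.
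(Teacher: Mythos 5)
Your proof is correct and takes essentially the same approach as the paper's: both directions rest on the $G_\delta$ characterization of complete metrizability in Polish spaces, on the fact that the pieces $K_i$ lie in pairwise disjoint open regions of $K$, and, for the converse, on $W[X]$ containing an open copy of $X$. The only difference is bookkeeping --- the paper assembles $W[X]$ directly as a union of $G_\delta$'s (handling $\Delta$ as a closed, hence $G_\delta$, summand), while you compute the complement $K\setminus W[X]$ as an $F_\sigma$.
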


\begin{proof}Suppose that $X$ is completely metrizable. Then each $\varphi_i[X]$ is a $G_\delta$-set in $K$.  Further, the sets $\varphi_i [X]$ lie in pairwise disjoint open regions of $K$, and so their union is $G_\delta$ in $K$. Thus $W[X]$ is the union of two $G_\delta$'s, $\Delta$ and $\bigcup_{i<\omega}\varphi_i[X]$, which is again a $G_\delta$ (in $K$). The converse is true because $W[X]$ contains an open copy of $X$. \end{proof}

\begin{ur}$X_1$ is not complete because it contains closed copies of the rationals, e.g. in the fibers above $C'$. Therefore $W[X_1]$ is not complete. Moreover, the completion of $W[X_1]$ is not widely-connected by the proof of Theorem \ref{hp} (we showed that every completely metrizable supserset of $W[X_1]$ contains an arc).   \end{ur}

\begin{ut}\label{lop}If $G$ is a dense $G_\delta$ in $C\times \mathbb R$, then 
$$D:=\{c\in C:G\cap \{c\}\times \mathbb R\text{ is a dense $G_\delta$ in }\{c\}\times \mathbb R\}$$ is a dense $G_\delta$ in $C$.\footnote{This strengthens   \S1 in Section 1 of  \cite{gdgraph}.}\end{ut}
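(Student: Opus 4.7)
The plan is to reduce the statement to a countable intersection of open dense sets via the open projection map, then invoke the Baire Category Theorem on $C$.

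First, write $G=\bigcap_{n<\omega}U_n$ where each $U_n$ is open and dense in $C\times \mathbb R$. Fix a countable base $\{I_k:k<\omega\}$ for the topology of $\mathbb R$, consisting of open intervals (e.g. intervals with rational endpoints). For every $c\in C$, each $U_n\cap(\{c\}\times \mathbb R)$ is already open in $\{c\}\times \mathbb R$, so $G\cap(\{c\}\times \mathbb R)$ is automatically $G_\delta$ in the fiber. The only real condition to verify is density; and since $\{c\}\times \mathbb R\cong \mathbb R$ is a Baire space, $G\cap(\{c\}\times \mathbb R)$ is dense in the fiber if and only if every $U_n\cap(\{c\}\times \mathbb R)$ is dense in the fiber, which in turn holds if and only if $U_n\cap(\{c\}\times I_k)\neq \varnothing$ for all $k<\omega$.

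Next I would show that each set
\[
D_{n,k}:=\pi\bigl[U_n\cap(C\times I_k)\bigr]=\{c\in C:U_n\cap(\{c\}\times I_k)\neq\varnothing\}
\]
is open and dense in $C$, where $\pi:C\times \mathbb R\to C$ is the first coordinate projection. Openness is immediate because $\pi$ is an open map and $U_n\cap(C\times I_k)$ is open in $C\times \mathbb R$. For density, let $V\subseteq C$ be a non-empty open set; then $V\times I_k$ is a non-empty open subset of $C\times \mathbb R$, and density of $U_n$ forces $U_n\cap(V\times I_k)\neq\varnothing$, so $V\cap D_{n,k}\neq \varnothing$.

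Finally, the observation above gives $D=\bigcap_{n<\omega}\bigcap_{k<\omega}D_{n,k}$, a countable intersection of dense open subsets of $C$. Since $C$ is a compact metric space (hence Baire), $D$ is a dense $G_\delta$ in $C$, completing the proof. There is no real obstacle here: the argument is a standard Kuratowski--Ulam style fibering argument, and the only point requiring any care is noting that the $G_\delta$ requirement on the fiber is free and that the first-coordinate projection on $C\times \mathbb R$ is an open map.
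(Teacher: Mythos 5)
Your proof is correct and is essentially the paper's own argument: your sets $D_{n,k}$ are exactly the complements of the paper's sets $F(i,j)=\{c\in C:G_i\cap\{c\}\times V_j=\varnothing\}$, and showing $D_{n,k}$ open and dense is the same as showing $F(i,j)$ closed and nowhere dense. The identification $D=\bigcap_{n,k}D_{n,k}$ and the final appeal to the Baire property of $C$ likewise match the paper step for step.
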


\begin{proof}Let $\{G_i:i<\omega\}$ be a collection of open sets in $C\times \mathbb R$ such that $G=\bigcap_{i<\omega} G_i$, and let $\{V_j:j<\omega\}$ be a countable basis for $\mathbb R$ consisting of non-empty sets. For each $i$ and $j$, define $$F(i,j)=\{c\in C:G_i\cap \{c\}\times  V_j=\varnothing\}.$$ 

$D=C\setminus \bigcup _{i,j<\omega} F(i,j)$:  Suppose that $d\in D$ and $i,j<\omega$.  Then $G\cap \{d\}\times V_j\neq\varnothing$ by density of $G\cap \{d\}\times \mathbb R$ in $\{d\}\times \mathbb R$. As $G\subseteq G_i$, we have  $G_i\cap \{d\}\times V_j\neq\varnothing$, so that $d\notin F(i,j)$.  Thus $D\subseteq C\setminus \bigcup _{i,j<\omega} F(i,j)$. Now let $c\in C\setminus \bigcup _{i,j<\omega} F(i,j)$. Fix $i<\omega$. As $G_i\cap\{c\}\times V_j\neq\varnothing$ for each $j<\omega$, we have that $G_i\cap \{c\}\times \mathbb R$ is dense in $\{c\}\times \mathbb R$. Thus $G\cap \{c\}\times \mathbb R=\bigcap_{i<\omega} G_i\cap \{c\}\times \mathbb R$ is a countable intersection of dense open subsets of $\{c\}\times \mathbb R$. By the Baire property of $\mathbb R$, $G\cap \{c\}\times \mathbb R$ is a dense $G_\delta$ in $\{c\}\times \mathbb R$, whence $c\in D$.

Each $F(i,j)$ is closed and nowhere dense in $C$: Fix $i,j<\omega$ and let $F=F(i,j)$. $F$ is closed in $C$: Let $c\in C\setminus F$.  There exists $r\in \mathbb R$ such that $\langle c,r\rangle\in G_i\cap \{c\}\times V_j$. There is an open set $U\times V\subseteq G_i$ with $c\in U$ and  $r\in V\subseteq V_j$. Then $F\cap U=\varnothing$. So $C\setminus F$ is open. $F$ is nowhere dense in $C$: If $U\subseteq C$ is non-empty and open, then by density of $G$ there exists $\langle c,r \rangle\in G\cap U\times V_j$. Then $c$ witnesses that $U\not \subseteq F$.

It now follows that from the Baire property of $C$ that $D$ is a dense $G_\delta$ in $C$.\end{proof}

\begin{ur}By Theorem \ref{lop} and the remark following Proposition \ref{one}, every completely metrizable widely-connected subset of $K$ has uncountable intersection with some composant of $K$. The original widely-connected set by Swingle \cite{swi} has exactly one point from each composant of $K$.  Miller's biconnected set \cite{mil} has  countable intersection with each composant, and the same is true of $W[X_2]$.  None of these examples are complete. \end{ur}

\begin{ut}\label{ttt}If $W$ is a  completely metrizable widely-connected subset of $K$, then there is a closed $F\subseteq K$ such that   $W\cap F=\varnothing$, $K\setminus F$ is connected,  and $F$ intersects every composant of  $K$.\end{ut}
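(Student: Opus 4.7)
The plan proceeds in three reductions. First, since $W$ is completely metrizable and $K$ is Polish, $W$ is a $G_\delta$ subset of $K$. Since $W$ is dense in $K$ by the remark after Proposition~\ref{one}, we can write $K\setminus W=\bigcup_{n<\omega}F_n$ with each $F_n$ closed and nowhere dense in $K$.

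Second, the requirement that $K\setminus F$ be connected comes for free whenever $F$ is closed and $F\cap W=\varnothing$: $W\subseteq K\setminus F$ is connected and dense in $K$, hence dense in $K\setminus F$, making $K\setminus F$ the closure in $K\setminus F$ of a connected set, hence connected. Moreover, every composant $P$ of $K$ contains a non-degenerate arc (a proper subcontinuum), and since $W$ contains no arc (a widely-connected set contains no interval of reals), this arc meets $K\setminus W$; hence every composant already meets $\bigcup_n F_n=K\setminus W$. So the task reduces to producing a single closed $F\subseteq\bigcup_n F_n$ that meets every composant of $K$.

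The main obstacle is that the indices $n(P)$ witnessing $P\cap F_{n(P)}\neq\varnothing$ may a priori be unbounded as $P$ ranges over composants, so no single $F_n$ need suffice. To surmount this I plan to apply Theorem~\ref{lop} in each of the $\omega$ charts $\varphi_i\colon C\times(0,1)\to K_i$: extending $\varphi_i^{-1}[W\cap K_i]$ to a dense $G_\delta$ set $G_i\subseteq C\times\mathbb R$, Theorem~\ref{lop} produces a dense $G_\delta$ set $D_i\subseteq C$ whose coordinates have $c$-fibers meeting $W$ in a dense $G_\delta$, and $D=\bigcap_i D_i$ remains a dense $G_\delta$ in $C$ by the Baire category theorem. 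For $c$ in the complementary meager $F_\sigma$ set $C\setminus D$, some chart's $c$-fiber contains an open subarc entirely in $K\setminus W$. I would then assemble the required $F$ as the closure of a carefully chosen countable family of such fiber subarcs together with suitable pieces of the $F_n$, arranging the selection so that all accumulations occur inside $K\setminus W$ (for instance on $\Delta\setminus W$ or within already-chosen pieces) and so that every composant is intercepted via its passage through the chosen fibers and $\Delta$. Verifying both closedness of $F$ and that it catches every composant is the technical heart of the argument.
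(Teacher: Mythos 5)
Your preliminary reductions are correct and coincide with the paper's: writing $K\setminus W=\bigcup_{n<\omega}F_n$ with each $F_n$ closed, getting connectedness of $K\setminus F$ for free from the density of the connected set $W$, and noting that every composant meets $\bigcup_n F_n$ because $W$ contains no arc. But the proposal stalls exactly where the proof has to happen. You assert that ``no single $F_n$ need suffice'' and then propose to assemble $F$ as the closure of a hand-picked countable family of fiber subarcs and pieces of the $F_n$, while conceding that closedness of this $F$ and the fact that it meets every composant---the entire content of the theorem---remain to be verified. As written this is not a proof, and the sketch is unlikely to succeed: since $W$ is dense in $K$, any closed $F$ disjoint from $W$ is nowhere dense, and intercepting all $\mathfrak c$ composants with such a set essentially forces a Cantor set's worth of fibers, which a countable selection with controlled accumulation does not obviously produce. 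Your invocation of Theorem \ref{lop} also points the wrong way: it identifies the comeager set of $c\in C$ over which $W$ is fiberwise a dense $G_\delta$ (useful for the remark that complete widely-connected sets meet some composant uncountably), whereas here you need fibers met by $K\setminus W$---and by widely-connectedness that is \emph{every} fiber, with no category argument needed at that stage.

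The missing idea is that a single $F_N$ does suffice. Work in one chart $\varphi_0\colon C\times(0,1)\to K_0$. Each fiber $\varphi_0[\{c\}\times(0,1)]$ is an arc, so widely-connectedness forbids $W$ from containing it; hence every $c\in C$ lies in $\pi[\varphi_0^{-1}F_n]$ for some $n$, i.e. $C=\bigcup_{n<\omega}\pi[\varphi_0^{-1}F_n]$. The Baire Category Theorem in $C$ then yields $N<\omega$ and a non-empty open $U\subseteq C$ with $U\subseteq\pi[\varphi_0^{-1}F_N]$, so $F_N$ meets every fiber over $U$. Each such fiber is a proper subcontinuum and therefore lies in a single composant; since every composant of $K$ is dense, it meets the open set $\varphi_0[U\times(0,1)]$ and hence contains an entire fiber over some $c\in U$, which $F_N$ meets. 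Thus $F:=F_N$ works. Indexing by points of $C$ rather than by composants, and applying Baire in $C$ to the projections $\pi[\varphi_0^{-1}F_n]$, is precisely what dissolves the ``unbounded indices over composants'' obstacle you identified.
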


\begin{proof}Since $W$ is completely metrizable, there is a collection $\{F_n:n<\omega\}$  of closed subsets of $K$  such that $W=K\setminus \bigcup _{n<\omega} F_n$. Let $\varphi_0$ and $K_0$ be as defined in Section 2.  By the widely-connected property of $W$, for each $c\in C$ there exists $n<\omega$ such that $F_n\cap \varphi_0[\{c\}\times (0,1)]\neq\varnothing$. Thus $C=\bigcup _{n<\omega} \pi [\varphi_0^{-1}F_n]$ where $\pi$ is the first coordinate projection in $C\times (0,1)$.  By the Baire Category Theorem there exists $N<\omega$ and a non-empty open $U\subseteq C$ such that $U\subseteq \pi [\varphi_0^{-1}F_N]$. The set $F:=F_N$ is as desired.  Clearly $W\cap F=\varnothing$.   And  $K\setminus F$ is connected since $W$ is connected and dense in $K$ (see the remark following the proof Proposition \ref{one}). Finally, the open set $\varphi_0[U\times (0,1)]$ meets every composant of $K$, so $F$ does as well.\footnote{Theorem 3 of \cite{cooks} says, more generally, that if $(F_n)$ is a sequence of closed subsets of an indecomposable continuum $I$, and each composant of $I$ intersects $\bigcup_{n<\omega}F_n$, then some $F_N$ intersects each composant of $I$.} \end{proof}

\section{Solution to the Erd\H{o}s-Cook problem}\label{ec}

The main result of \cite{deb} is that $K$ has a closed subset $F$ with the properties stated in Theorem \ref{ttt}. Considered as a subset of $C\times [0,1]$, $F$ is the closure of the graph of a certain hereditarily discontinuous function. Specifically, let $D=\{d_n:n<\omega\}$ be a  dense subset of the non-endpoints in $C$, let $(a_n)$ be a sequence of positive real numbers such that $\sum _{n=0}^\infty a_n=1$, and define $f:C\to [0,1]$ by $$f(c)\hspace{2mm}=\hspace{-2mm}\sum_{\{n<\omega:d_n<c\}} \hspace{-2mm}a_n.$$ Then $F=\overline{\text{Gr}(f)}$.

\begin{ue3}Enumerate the rationals $\mathbb Q=\{q_n:n<\omega\}$. Let $$Y=C\times \mathbb R \setminus \bigcup _{n<\omega} \overline{\text{Gr}(f+q_n)}.$$  Define $\Xi:C\times \mathbb R \to C\times (0,1)$  by $\Xi(\langle c,r\rangle)=\big\langle c,\frac{\arctan(r)}{\pi}+\frac{1}{2}\big\rangle.$ Let \[X_3= \Xi [Y].\] 
\end{ue3}

Recall that if $X$ is a space then $Q\subseteq X$ is a \textit{quasi-component} of $X$ if $Q=\bigcap \{A:A\text{ is a clopen subset of }X\text{ and }q\in A\}$ for each $q\in Q$.

\begin{up}\label{line}If $U$ is open in $C$ and $V=(a,b)$ is an open interval in $\mathbb R$, then for every $c\in U$,  $(\{c\}\times V) \setminus \overline{\Gr(f)}$ is a quasi-component of $(U\times V) \setminus \overline{\Gr(f)}$.\end{up}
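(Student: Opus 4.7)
The plan is to prove both containments for the quasi-component $Q$ of an arbitrary point $(c,r) \in \Omega := (U \times V) \setminus \overline{\Gr(f)}$. The containment $Q \subseteq (\{c\} \times V) \cap \Omega$ is immediate from zero-dimensionality of $C$: for any $c' \in U \setminus \{c\}$, a clopen partition $C = C_1 \sqcup C_2$ with $c \in C_1$ and $c' \in C_2$ pulls back to a clopen partition of $\Omega$ that separates the fibers over $c$ and $c'$.

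For the reverse containment I would argue by contradiction, assuming $\Omega = A \sqcup B$ is a clopen partition with $(c, r) \in A$ and $(c, r') \in B$ both on the fiber over $c$. First, $(\{c\} \times V) \cap \Omega$ equals $\{c\} \times V$ with at most two points of $\overline{\Gr(f)}$ removed --- namely $(c, f(c))$ and, if $c = d_n \in D$, also $(c, f(c) + a_n)$ --- so it consists of two or three connected vertical subintervals, each lying wholly in $A$ or in $B$. Next, using openness of $A$ and $B$, monotonicity and left-continuity of $f$ on $C$, and the fact that $\sum a_n < \infty$, I would shrink to a clopen neighborhood $U_0 \ni c$ in $C$ on which the remaining jumps of $f$ are uniformly small and $f$ varies arbitrarily little from $f(c)$, and then establish a propagation lemma: for every $c' \in U_0$, the lower and upper pieces of $(\{c'\} \times V) \cap \Omega$ inherit the $A$/$B$ labels of the corresponding pieces at $c$.

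The heart of the argument is then a middle-piece trick using density of $D$ in $C$. I would pick any $d_m \in U_0 \cap D$ and observe that the connected segment $\{d_m\} \times (f(d_m), f(d_m) + a_m) \subseteq \Omega$ must lie wholly in $A$ or wholly in $B$. If in $A$, a clopen box $U_1 \times W_s \cap \Omega \subseteq A$ around some $(d_m, s)$ in this segment, together with any $c' \in U_0 \cap U_1$ with $c' < d_m$ (existing since $d_m$ is a non-endpoint of $C$), gives $f(c') \leq f(d_m) < s$ by monotonicity, so $(c', s)$ lies in the upper piece at $c'$ and hence in $B$ by propagation --- contradicting $(c', s) \in A$. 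The case where the middle segment lies in $B$ is symmetric, using $c' > d_m$ so that $f(c') \geq f(d_m) + a_m > s$ places $(c', s)$ in the lower piece at $c'$. When $c \in D$ itself, the fiber already has three pieces; adjacent pairs are linked by direct propagation (since moving slightly past $c$ fuses two adjacent pieces at $c$ into a single piece at $c'$), while the Lower--Upper assignment is ruled out by the middle-piece trick applied at a nearby $d_m \neq c$.

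The hard part will be the propagation lemma: choosing $U_0$ to simultaneously avoid the finitely many large jumps of $f$ and to control its variation near $c$ requires routine but case-laden continuity and monotonicity estimates, with bookkeeping that depends on whether $c \in D$ and on which side of $c$ the chosen auxiliary $d_m$ sits.
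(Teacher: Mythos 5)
Your proof is correct and is essentially the paper's argument reorganized as a proof by contradiction: your ``middle-piece trick'' (using monotonicity of $f$ to link the middle segment at a jump point $d_m$ to the upper pieces of fibers on the left and the lower pieces of fibers on the right) is exactly the paper's mechanism for showing each fiber over $D$ lies in a single quasi-component, while your propagation lemma plays the role of the paper's density argument transferring this to arbitrary fibers of $U$. The containment $Q\subseteq (\{c\}\times V)\setminus\overline{\Gr(f)}$ via zero-dimensionality of $C$ is identical to the paper's closing remark.
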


\begin{proof}We first show that if $d_n\in  U\cap D$ ($n<\omega$), then $(\{d_n\}\times V) \setminus \overline{\text{Gr}(f)}$ is contained in a quasi-component of $(U\times V) \setminus \overline{\text{Gr}(f)}$. Fix $n<\omega$ such that $d_n\in U$. Let $r=f(d_n)$ and  $s=f(d_n)+a_n (=\inf\{f(c):d_n<c\})$. Since $f$ is a non-decreasing function with set of (jump) discontinuities $D$, $F:=\overline{\text{Gr}(f)}$ meets a vertical interval $\{c\}\times [0,1]$ in exactly one point $\langle c,f(c)\rangle$ if $c\in C\setminus D$, and meets  $\{d_n\}\times [0,1]$ in exactly two points $\langle d_n,r\rangle$ and $\langle d_n,s\rangle $. Thus $(\{d_n\}\times V) \setminus F$ is equal to
\begin{align*} 
&\{d_n\}\times (a,b), \\
&\{d_n\}\times (a,r)\cup(r,b), \\
&\{d_n\}\times (a,s)\cup (s,b)\text{, or} \\
&\{d_n\}\times (a,r)\cup (r,s)\cup (s,b),
\end{align*}
depending on whether neither, one, or both of $r$ and $s$ are in $V$. Because $d_n$ is a non-endpoint of $C$, it is the limit of  increasing and decreasing sequences of points in $C\setminus D$. Suppose that $r\in V$. Then the intervals $\{c\}\times (a,\min\{b,f(c)\})$, $d_n<c\in U$, are well-defined subsets of $(U\times V)\setminus F$. They limit to points vertically above and below $\langle d_n,r \rangle$, so that $\{d_n\}\times (a,r)\cup(r,\min\{b,s\})$ is contained in a quasi-component of $(U\times V)\setminus F$. Similarly, if $s\in V$ then  in $(U\times V)\setminus F$  the intervals $\{c\}\times (\max\{a,f(c)\},b)$, $d_n>c\in U\setminus D$, bridge the gap in $\{d_n\}\times (\max\{a,r\},s)\cup(s,b)$. Thus $(\{d_n\}\times V)\setminus F$ is contained in a quasi-component of $(U\times V)\setminus F$. 

Since $D$ is dense in $U$ and $(\{d_n\}\times V)\setminus F$ is dense in $\{d_n\}\times V$ for each $n<\omega$,  $(\{c\}\times V)\setminus F$ is contained in a quasi-component of $(U\times V)\setminus F$ for each $c\in U$. The opposite inclusion holds because $C$ has a basis of clopen sets.\end{proof}

\begin{figure}[H]
  \centering
  \includegraphics[scale=.6]{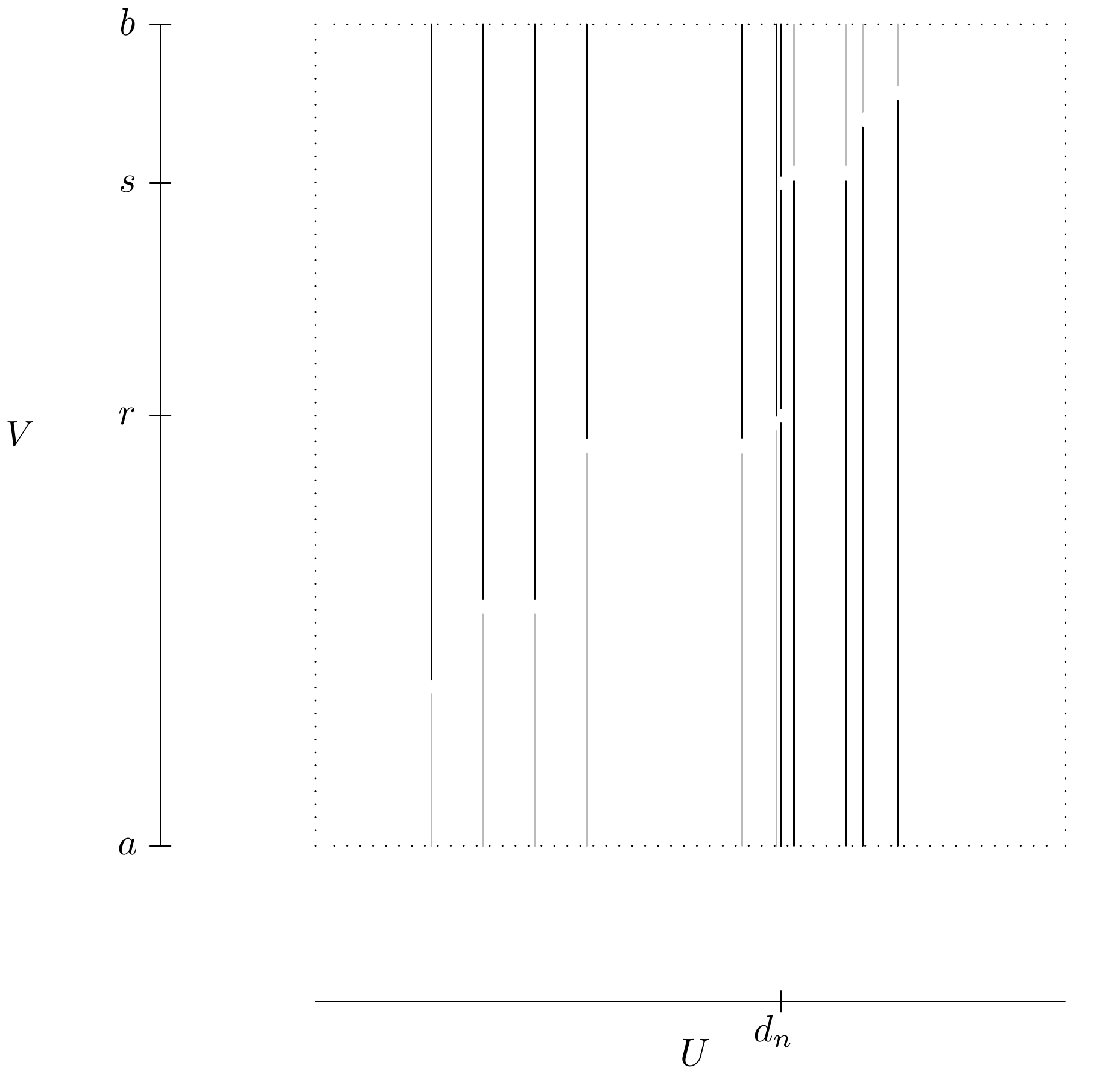} 
   \caption{$(U\times V)\setminus \overline{\text{Gr}(f)}$ when $d_n\in U$ and $r,s\in V$}
\end{figure}

\begin{up}\label{ll}For each $c\in C$, $ (\{c\}\times \mathbb R)\cap Y$ is a quasi-component of $Y$.\end{up}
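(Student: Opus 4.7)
The plan is to prove the two containments defining the quasi-component separately. The easy direction — that the quasi-component of any $(c,r)\in Y$ is contained in $(\{c\}\times\mathbb{R})\cap Y$ — uses the total disconnectedness of $C$: for any $c'\in C\setminus\{c\}$, pick a clopen $U\subseteq C$ with $c\in U$ and $c'\notin U$, whereupon $(U\times\mathbb{R})\cap Y$ is a clopen subset of $Y$ containing $(c,r)$ and missing the fiber over $c'$; intersecting over all such $U$ collapses the first coordinate to $\{c\}$.

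For the reverse inclusion I would argue by contradiction. Suppose $Y=A\sqcup B$ is a clopen partition with $(c,r)\in A$ and $(c,r')\in B$, and without loss of generality $r<r'$. Since $Y$ is dense in $C\times\mathbb{R}$ and $\{c\}\times[r,r']$ is connected, the closed sets $\overline A\cap(\{c\}\times[r,r'])$ and $\overline B\cap(\{c\}\times[r,r'])$ cover the segment, are both non-empty, and hence share a point $(c,t)$. Because $A,B$ are closed in $Y$ we have $\overline A\cap\overline B\cap Y=\varnothing$, so $(c,t)$ lies on some $F_n:=\overline{\Gr(f+q_n)}$.

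To push the contradiction through I would fix a clopen neighborhood $U\ni c$ and an open interval $V\ni t$, and work inside $(U\times V)\setminus F_n$, in which by Proposition \ref{line} applied to the translate $f+q_n$ the punctured fiber $(\{c\}\times V)\setminus F_n$ is a single quasi-component. Form the open ``kernels''
\[A^{\circ}=\{x\in(U\times V)\setminus F_n:x\text{ has a neighborhood whose }Y\text{-trace is }\subseteq A\}\]
and $B^{\circ}$ analogously. These are disjoint, their union is dense in $(U\times V)\setminus F_n$ (because $Y$ is), and their common frontier in $(U\times V)\setminus F_n$ is contained in the nowhere-dense set $\bigcup_{m\ne n}F_m\cap(U\times V)$.

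The main obstacle is that $A^{\circ}$ need not be closed in $(U\times V)\setminus F_n$, so Proposition \ref{line} cannot be invoked against it directly. My plan to overcome this is to iterate: any frontier point of $A^{\circ}$ lies on some $F_m$ with $m\ne n$, which furnishes a new boundary point to which the same local analysis applies in a smaller neighborhood. Inside the completely metrizable space $(U\times V)\setminus F_n$ a Baire-category argument should eventually produce an open set on which $A^{\circ}\cup B^{\circ}$ exhausts some fiber $(\{c'\}\times V)\setminus F_n$, thereby genuinely clopen-splitting that fiber and contradicting Proposition \ref{line}. Extracting this genuine clopen separation from the iteratively refined frontier structure is the step I expect to be the technical heart of the proof.
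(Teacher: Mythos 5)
There is a genuine gap, and it sits exactly where you say you expect ``the technical heart'' to be: your sketch defers the essential step rather than carrying it out. The difficulty arises because you choose the index $n$ merely so that the single point $\langle c,t\rangle$ lies on $F_n:=\overline{\Gr(f+q_n)}$; this gives you no control over the rest of $\overline A\cap\overline B$ near that point, which is why $A^{\circ}$ and $B^{\circ}$ fail to be closed and you are driven to an iteration over frontier points lying on the other $F_m$'s. That iteration is not shown to terminate or to converge to an actual clopen splitting, and the set $\bigcup_{m\neq n}F_m\cap(U\times V)$ containing the common frontier is merely meager, not closed and nowhere dense, so a single Baire step inside $(U\times V)\setminus F_n$ does not dispose of it.

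The missing idea (and the paper's move) is to apply the Baire Category Theorem \emph{once}, to the non-empty closed set $\overline A\cap\overline B$ itself. Since $A$ and $B$ are closed and disjoint in $Y$, one has $\overline A\cap\overline B\subseteq(C\times\mathbb R)\setminus Y=\bigcup_{n<\omega}F_n$, a countable cover of a completely metrizable space by closed sets; hence there exist $N<\omega$, an open $U\subseteq C$ and an open interval $V$ with $\varnothing\neq\overline A\cap\overline B\cap(U\times V)\subseteq F_N$. With this choice the sets $\overline A\cap(U\times V)\setminus F_N$ and $\overline B\cap(U\times V)\setminus F_N$ are complementary, hence both open, i.e.\ they form a genuine clopen partition of $(U\times V)\setminus F_N$ --- no kernels and no iteration are needed. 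Proposition \ref{line} then forces each punctured fiber $(\{c'\}\times V)\setminus F_N$ entirely into one side; projecting gives disjoint open $U_1,U_2$ with $U_1\cup U_2=U$ (using $\abs{F_N\cap(\{c'\}\times V)}\leq 2$), so $U_1\times V$ and $U_2\times V$ separate $A\cap(U\times V)$ from $B\cap(U\times V)$ and yield $\overline A\cap\overline B\cap(U\times V)=\varnothing$, the desired contradiction. Your easy direction and your initial reduction to a point of $\overline A\cap\overline B$ on some $F_n$ are fine; the proof lives in replacing ``some $F_n$ through one point'' by ``one $F_N$ capturing a relatively open piece of $\overline A\cap\overline B$.''
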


\begin{proof}We need to show each fiber $ (\{c\}\times \mathbb R)\cap Y$ is contained in a quasi-component of $Y$. Suppose not. Then there exists $c\in C$ and a partition of $Y$ into disjoint clopen sets $A$ and $B$ such that  $A\cap(\{c\}\times \mathbb R)\neq\varnothing$ and $B\cap(\{c\}\times \mathbb R)\neq\varnothing$. Density of $Y$ in $C\times \mathbb R$ implies that $\overline A \cup \overline B =C\times \mathbb R$. In particular, $\{c\}\times \mathbb R\subseteq \overline A \cup \overline B$, so  $\overline A \cap \overline B\neq\varnothing$ by connectedness of $\{c\}\times \mathbb R$.  Now apply the Baire Category Theorem in $\overline A \cap \overline B$. As $\overline A \cap \overline B\subseteq \bigcup _{n<\omega} \overline{\text{Gr}(f+q_n)}$, there exists $N<\omega$, an open $U\subseteq C$, and an open interval $V\subseteq \mathbb R$ such that $$\varnothing \neq  \overline A \cap \overline B \cap (U\times V)\subseteq F_N:= \overline{\text{Gr}(f+q_N)}.$$  

Notice that
\begin{align*} 
\overline A \cap (U\times V)\setminus F_N=((C\times \mathbb R) \setminus \overline B)\cap (U\times V)\setminus F_N&\text{; and} \\
\overline B \cap (U\times V)\setminus F_N=((C\times \mathbb R) \setminus \overline A)\cap (U\times V)\setminus F_N&.
\end{align*} 
So $\overline A \cap (U\times V)\setminus F_N$ and $\overline B \cap (U\times V)\setminus F_N$ are open sets.  They are  disjoint and their union is equal to $(U\times V)\setminus F_N$.  By Proposition \ref{line},   $(\{c\}\times V) \setminus F_N$ is contained in either $\overline A$ or $\overline B$ whenever $c\in U$. So 
$$U_1:=\pi\big[\overline A \cap (U\times V)\setminus F_N\big]\text{ and } U_2:=\pi\big[\overline B \cap (U\times V)\setminus F_N\big]$$
are disjoint open subsets of $C$, $\pi$ being the first coordinate projection ($\pi$ is an open mapping).  Further, $U_1\cup U_2=U$ because $\abs{F_N\cap (\{c\}\times V)}\leq 2$ for each $c\in C$. Hence $U_1\times V$ and $U_2\times V$ form a clopen partition of $U\times V$. As $A\cap (U\times V)\subseteq U_1\times V$ and $B\cap (U\times V)\subseteq U_2\times V$, we have $\overline A \cap \overline B \cap (U\times V)=\varnothing$, a contradiction.\end{proof}

\begin{up}\label{oo}$X_3$ is connectible. \end{up}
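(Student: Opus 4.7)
The plan is to combine Proposition \ref{ll} with the density of $X_3$ in $C\times (0,1)$. Since $\Xi\colon C\times \mathbb{R}\to C\times (0,1)$ is a fiber-preserving homeomorphism that carries $Y$ onto $X_3$, Proposition \ref{ll} transports to the following statement: for every $c\in C$, the fiber trace $(\{c\}\times (0,1))\cap X_3$ is a quasi-component of $X_3$. Moreover, $Y$ is a dense $G_\delta$ in $C\times \mathbb{R}$, because each $\overline{\Gr(f+q_n)}$ meets every vertical line in at most two points and so is nowhere dense; in fact, on every fiber $\{c\}\times \mathbb{R}$ the set $Y$ omits at most countably many points. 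Consequently, $X_3\cap (\{c\}\times (0,v))\neq \varnothing$ for every $c\in C$ and every $v\in (0,1)$.

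With these two ingredients in place, connectibility should come out as a short proof by contradiction. Let $A$ be a clopen subset of $X_3\cup (C\times \{0\})$, fix $c\in C$, and suppose $\langle c,y_0\rangle \in A$ for some $y_0\in (0,1)$. Put $B:=(X_3\cup (C\times\{0\}))\setminus A$, and assume toward contradiction that $\langle c,0\rangle \in B$. First, $A\cap X_3$ is a clopen subset of $X_3$ containing $\langle c,y_0\rangle$, so by the defining property of a quasi-component it must contain all of $(\{c\}\times (0,1))\cap X_3$. Second, since $B$ is open in $X_3\cup (C\times \{0\})$ and contains $\langle c,0\rangle$, there exist a clopen $U\subseteq C$ with $c\in U$ and a $v\in (0,1)$ such that $(X_3\cup (C\times\{0\}))\cap (U\times [0,v))\subseteq B$. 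By fiber-density there is some $y\in (0,v)$ with $\langle c,y\rangle\in X_3$; this point then lies in $A$ by the quasi-component step and in $B$ by the neighborhood step, contradicting $A\cap B=\varnothing$.

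I do not anticipate any real obstacle: Proposition \ref{ll} has already carried out the substantive work of identifying the quasi-components of $Y$, and the remaining ingredients---that $\Xi$ respects fibers and is a homeomorphism, and that each fiber of $X_3$ accumulates on $\{c\}\times \{0\}$---are essentially immediate from the definitions. The only point that warrants a moment's care is verifying that we may approach $\langle c,0\rangle$ from above by points of $X_3$, and this follows at once from the observation that $(\{c\}\times \mathbb{R})\cap Y$ is cocountable in $\{c\}\times \mathbb{R}$, hence its $\Xi$-image is dense in $\{c\}\times (0,1)$.
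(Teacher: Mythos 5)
Your proposal is correct and takes essentially the same route as the paper: transport Proposition \ref{ll} through the fiber-preserving homeomorphism $\Xi$ to conclude that the trace of $A$ absorbs all of $X_3\cap(\{c\}\times(0,1))$, then use the density of that fiber trace near $\langle c,0\rangle$ (your cocountability observation) to force $\langle c,0\rangle\in A$. The paper phrases the last step via closedness of $A$ rather than openness of its complement, but the argument is the same.
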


\begin{proof}Suppose that $A$ is a clopen subset of $X_3\cup (C\times \{0\})$, $c\in C$, and $A\cap (\{c\}\times (0,1))\neq\varnothing$. The homeomorphism $\Xi$ preserves the form of quasi-components. Therefore,  Proposition \ref{ll} implies that $X_3\cap (\{c\}\times (0,1))\subseteq A$. Since $X_3$ is dense in $\{c\}\times [0,1)$ and $A$ is closed, we have $\langle c,0\rangle\in A$. \end{proof}

\begin{ut}$W[X_3]$ is widely-connected and completely metrizable.\end{ut}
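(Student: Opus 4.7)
The plan is to deduce both assertions from earlier results. Widely-connectedness will come from Proposition \ref{one}, and complete metrizability from Proposition \ref{comp}; in both cases it suffices to establish the analogous property for $X_3$ itself, after which the assembly into $W[X_3]$ requires no new ideas.

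To apply Proposition \ref{one}, I would verify that $X_3$ is dense in $C\times(0,1)$, connectible, and hereditarily disconnected. Connectibility is already Proposition \ref{oo}. For density, each $\overline{\Gr(f+q_n)}$ meets every vertical fiber in at most two points, so it is nowhere dense in $C\times\mathbb R$; by the Baire Category Theorem, $Y$ is a dense $G_\delta$, and since $\Xi$ is a homeomorphism of $C\times\mathbb R$ onto $C\times(0,1)$, the image $X_3=\Xi[Y]$ is dense in $C\times(0,1)$. For hereditary disconnectedness, let $A\subseteq Y$ be connected. Proposition \ref{ll} forces $A$ to lie in a single fiber $(\{c\}\times\mathbb R)\cap Y$; this fiber is $\{c\}\times\mathbb R$ with a countable set of points removed, and because $\{f(c)+q_n:n<\omega\}$ is dense in $\mathbb R$ the removed set is dense. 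Therefore the fiber contains no nondegenerate interval, forcing $A$ to be a singleton. The homeomorphism $\Xi$ transfers the same conclusion to $X_3$.

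For complete metrizability, observe that $Y$ is a $G_\delta$-subset of the completely metrizable space $C\times\mathbb R$ (being the complement of a countable union of closed sets), hence itself completely metrizable; the homeomorphism $\Xi$ carries this property to $X_3$, and Proposition \ref{comp} promotes it to $W[X_3]$. The only step that is not essentially formal is the hereditary disconnectedness argument, where one must recognize that the rationals $\{q_n\}$ were folded into the definition of $Y$ precisely so that every vertical fiber of $Y$ would be a copy of $\mathbb R$ punctured along a dense countable set. Once that observation is made, the remaining pieces slot together with no additional topology.
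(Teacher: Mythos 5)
Your proposal is correct and follows essentially the same route as the paper: verify that $X_3$ is dense, connectible (Proposition \ref{oo}), and hereditarily disconnected so that Proposition \ref{one} applies, and note that $X_3$ is a $G_\delta$ so that Proposition \ref{comp} applies. The only cosmetic difference is that you route the reduction to single fibers through Proposition \ref{ll} and the density through Baire category, where the paper argues directly from each fiber of $Y$ being $\mathbb R$ minus one or two shifted copies of $\mathbb Q$; both are sound.
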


\begin{proof}Each fiber $(\{c\}\times \mathbb R)\cap Y$ is a real line minus one or two shifted copies of $\mathbb Q$, and is therefore hereditarily disconnected and dense in $\{c\}\times \mathbb R$.  It follows that  $X_3$ is hereditarily disconnected and dense in $C\times  (0,1)$. By Propositions \ref{one} and \ref{oo}, $W[X_3]$ is widely-connected.  $X_3$ is a $G_\delta$-subset of $C\times (0,1)$ by design, so it is completely metrizable.  By Proposition  \ref{comp}, $W[X_3]$ is  completely metrizable.\end{proof}

E.W. Miller \cite{mil} and M.E. Rudin \cite{rud1} showed that, consistently, a widely-connected subset of the plane can be biconnected. This is not the case with $W[X_3]$.

\begin{up}\label{10}$W[X_3]$ is not biconnected.\end{up}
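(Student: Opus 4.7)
The plan is to exhibit a decomposition $W[X_3] = A \sqcup B$ with $A$ and $B$ each non-degenerate and connected, contradicting biconnectedness. The strategy is to split $X_3$ along the central horizontal line of $Y$, exploit a one-sided connectibility for each half, and then carefully distribute the diagonal $\Delta$ between the two resulting pieces.

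First I would split the set. Let
\[
Y^- = Y \cap (C \times (-\infty,0)), \qquad Y^+ = Y \cap (C \times [0,\infty)),
\]
and set $X_3^- = \Xi[Y^-] \subseteq C \times (0,1/2)$ and $X_3^+ = \Xi[Y^+] \subseteq C \times [1/2,1)$, so that $X_3 = X_3^- \sqcup X_3^+$. Re-running the proofs of Propositions~\ref{line}, \ref{ll}, and \ref{oo} on the open half-plane $C \times (-\infty,0)$ in place of $C \times \mathbb R$ (the sets $F_n$ still ``cut'' each open rectangle in the same way, and the Baire argument goes through unchanged) shows that each fiber of $Y^-$ is a quasi-component of $Y^-$, and hence that $X_3^-$ is connectible to the bottom $C \times \{0\}$. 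The reflection $r \mapsto -r$ on $\mathbb R$ sends $Y$ to a set of the same form (shifted staircases for a different enumeration of $\mathbb Q$), so by symmetry $X_3^+$ is ``top-connectible,'' i.e.\ connectible to $C \times \{1\}$ in the obvious sense.

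Next I would partition the diagonal. In the bucket-handle, the extension $\overline\varphi_i:C\times[0,1]\to\overline{K_i}$ sends $C\times\{0\}$ onto a Cantor set $\Delta_i^{\text{bot}}\subseteq\Delta$ and $C\times\{1\}$ onto $\Delta_i^{\text{top}}\subseteq\Delta$. After choosing the orientations of the $\varphi_i$'s consistently with the combinatorial structure of $K$, I would produce a partition $\Delta = \Delta^{\text{bot}}\sqcup\Delta^{\text{top}}$ such that $\Delta_i^{\text{bot}}\subseteq\Delta^{\text{bot}}$ and $\Delta_i^{\text{top}}\subseteq\Delta^{\text{top}}$ for every $i$, and then define
\[
A = \Delta^{\text{bot}} \cup \bigcup_{i<\omega} \varphi_i[X_3^-], \qquad B = \Delta^{\text{top}} \cup \bigcup_{i<\omega} \varphi_i[X_3^+].
\]
These are disjoint, their union is $W[X_3]$, and both are uncountable. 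To show each is connected, I would rerun the closure-and-bumping argument from the proof of Proposition~\ref{one}: any clopen partition of $A$ would yield, at a meeting point $p \in K_i$, a sequence of points in one side converging to $p$, and bottom-connectibility of $X_3^-$ would force the corresponding point $\overline\varphi_i(p(0),0)\in\Delta_i^{\text{bot}}\subseteq A$ to lie in both sides, a contradiction. The argument for $B$ is analogous using top-connectibility.

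The main obstacle is the global orientation in the partition step: the ``bot'' and ``top'' labels must be assigned to the $\varphi_i$'s so that $\Delta^{\text{bot}}$ and $\Delta^{\text{top}}$ are actually disjoint and their union is $\Delta$. In the rectilinear construction of $K$ neighboring strips $K_i$ and $K_{i+1}$ can share a $\Delta$-boundary point, and if that point is simultaneously $\Delta_i^{\text{top}}$ and $\Delta_{i+1}^{\text{bot}}$ the naive choice fails. Resolving this is the delicate combinatorial heart of the proof, and if a single global partition cannot be arranged, one should refine $X_3$ further --- splitting it into countably many pieces matched to individual strips --- so that the resulting $A$ and $B$ each inherit connectedness from the appropriate one-sided connectibility.
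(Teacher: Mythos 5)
Your plan cannot succeed, for a reason that is visible before any of the details about one-sided connectibility or the coloring of $\Delta$ are examined: the paper has already proved that $W:=W[X_3]$ is \emph{widely-connected}, so every connected subset of $W$ with more than one point must be dense in $W$. Your set $A=\Delta^{\text{bot}}\cup\bigcup_{i<\omega}\varphi_i[X_3^-]$ meets $K_0$ only in $\varphi_0[X_3^-]\subseteq\varphi_0[C\times(0,1/2)]$, so $A$ is disjoint from the open set $\varphi_0[C\times(1/2,1)]$, which does meet $W$. Hence $A$ is not dense in $W$; being uncountable and non-dense, $A$ is automatically disconnected, and the same applies to $B$. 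Any decomposition of a widely-connected space into two non-degenerate connected pieces must consist of two mutually dense, thoroughly interlaced sets, so no ``bottom half / top half'' splitting can work. (Your own worry about the partition of $\Delta$ is also fatal on independent grounds: since $\Delta\subseteq\overline{K_0}$, the two end Cantor sets of $K_0$ cover $\Delta$, and both end Cantor sets of the next band lie inside a single end Cantor set of $K_0$; so no choice of orientations of the $\varphi_i$ yields disjoint $\Delta^{\text{bot}}$ and $\Delta^{\text{top}}$ with the required properties. But the density obstruction kills the plan first.)

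The paper's argument is of a completely different character, and it is the one to aim for. It first shows that $W\setminus S$ is connected for every countable $S\subseteq W$ (the key point being that Proposition~\ref{line} survives replacing $\overline{\Gr(f)}$ by $\overline{\Gr(f)}\cup\{x\}$, so deleting countably many points preserves connectibility of each $K_i$-piece, and a local argument at $\Delta$ upgrades connectedness of $\Delta\cup(W\setminus S)$ to connectedness of $W\setminus S$). Consequently every closed separator of $W$ is uncountable, hence of cardinality $\mathfrak c$. A transfinite Bernstein-type construction then produces $Z\subseteq W$ such that both $Z$ and $W\setminus Z$ meet every closed separator, which forces both to be connected; thus $W$ is not biconnected. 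In short, one cannot exhibit the two connected halves concretely; one must diagonalize against all possible separations.
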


\begin{proof}We first show that if $S$ is a countable  subset of $W:=W[X_3]$, then $W\setminus S$ is  connected. To that end, let $S=\{x_n:n<\omega\}$ be a countable subset of $W$.  Note that Proposition \ref{line} holds if $\overline{\text{Gr}(f)}$ is replaced by $\overline{\text{Gr}(f)}\cup \{x\}$ for any $x\in C\times \mathbb R$. By the proofs of Propositions \ref{ll} and \ref{oo}, it follows that for each $i<\omega$,  $$K_i \setminus \Big(\bigcup _{n<\omega} \big(\varphi_i \circ \Xi \big[\overline {\text{Gr}(f+q_n)}\big]\big) \cup\{x_n\}\Big)$$ is connectible (considered as a subset of $C\times (0,1)$). By Proposition \ref{one}, $W':=\Delta \cup (W\setminus S)$ is connected.  

Let $W''=W\setminus S$. We want to show that $W''$ is connected.  Suppose to the contrary that $\{A,B\}$ is a non-trivial clopen partition of $W''$. Then $\text{cl}_{W'} A \cap \text{cl}_{W'} B$ is a non-empty subset of $\Delta$. Let $p\in \text{cl}_{W'} A \cap \text{cl}_{W'} B$.  There exists $j<\omega$ such that $p\in \overline {K _j}$. There is a $K$-neighborhood of $p$ that identifies with $C\times [-1,1]$ in such a way that $C\times [-1,0]\subseteq \overline{K_0}$, $C\times [0,1]\subseteq \overline{K_j}$, and $p\in C\times \{0\}$. We think  of $C\times [-1,1]$ as being an actual subset of $K$, and give local coordinates $\langle d,0\rangle$ to $p$. 

Note that $W''\cap\{c\}\times [-1,1]$ is dense in $\{c\}\times [-1,1]$ for each $c\in C$. Since $W''\cap K_0$ and $W''\cap K_j$ are connectible, for each point $\langle c,0\rangle\in A$  we have  $W''\cap (\{c\}\times [-1,1])\subseteq A$, and similarly for $B$.  If $p$ is the limit of sequences of points in $A$ and $B$ that are in $C\times \{0\}$, then points in $W''\cap (\{d\}\times (0,1])$ would be limit points of fibers in $A$ and $B$. This cannot happen.  Therefore there is an open $U\subseteq C$ such that $p\in  W''\cap (U\times \{0\})\subseteq A$, without loss of generality. Since $p\in \overline B$, there exists $b\in B$ such that $b(0)$, the first local coordinate of $b$, is in $U$. There is an open set $T\times V$ with $b\in W''\cap (T\times V)\subseteq B$. Since $W''\cap \Delta$ is dense in $\Delta$, there exists $a\in A\cap [(U\cap T)\times \{0\}]$. We have a contradiction:  $W'\cap \{a(0)\}\times [-1,1]$ meets both $A$ and $B$. We have shown that $W''$ is connected.

Thus every separator of $W$ is uncountable. Let $$\mathscr S=\{S:S \text{ is a closed subset of }W\text{ and }W\setminus S \text{ is not connected}\}.$$ If $Z$ is a Bernstein set in $W$, then  $Z\cap S\neq\varnothing$ and $(W\setminus Z)\cap S\neq\varnothing$ for each $S\in \mathscr S$, implying that both $Z$ and $W\setminus Z$ are connected.   So $W$ is not biconnected.

 A set like $Z$ can be constructed as follows using separability and completeness of $W$. Let $\{S_\alpha:\alpha<\mathfrak c\}$ be an enumeration  of $\mathscr S$. Every member of $\mathscr S$ is closed and uncountable, and thus has  cardinality $\mathfrak c$. Let $y_0$ and $z_0$ be distinct points in $S_0$.  If $\alpha<\mathfrak c$ and $y_\beta$ and $z_\beta$ have been defined for $\beta<\alpha$, then there are two distinct points $y_\alpha,z_\alpha\in S_\alpha \setminus (\{y_\beta:\beta<\alpha\}\cup \{z_\beta:\beta<\alpha\})$. Then $Z=\{z_\alpha:\alpha<\mathfrak c\}$ is as desired. \end{proof}

\section{Related question}


Due to Bernstein sets, every separable completely metrizable biconnected space has a countable separator.

\begin{uq}Does every separable completely metrizable biconnected space have a dispersion point?\end{uq}

We have already seen that Miller's biconnected set  is not completely metrizable. Rudin's examples in \cite{rud1} and \cite {rud2} fail to be complete for similar reasons.  These are currently the only  known metric examples without dispersion points.

\setstretch{1}

\end{document}